\numberwithin{equation}{section}
\newcommand{\E}{\mathbb{E}}
\newcommand{\pr}{\mathbb{P}}
\newcommand{\PP}{\mathbb{P}}
\newcommand\cA{\mathcal{A}}
\newcommand\cF{\mathcal{F}}
\newcommand\cM{\mathcal{M}}
\newcommand\cN{\mathcal{N}}
\newcommand\cL{\mathcal{L}}
\newcommand{\mbf}[1]{\mathbf{#1}}
\def\-{\text{-}} 
\theoremstyle{plain}
\newtheorem{theorem}{Theorem}[section]
\newtheorem{lemma}[theorem]{Lemma}
\newtheorem{corollary}[theorem]{Corollary}
\theoremstyle{definition}
\newtheorem{remark}[theorem]{Remark}
\def\R{\mathbb{R}}
\def\N{\mathbb{N}}
\newcommand{\norm}[1]{\left\lVert#1\right\rVert}
\newcommand{\ind}[1]{{\bf 1}{\left( #1 \right)}} 
\newcommand{\law}{\operatorname{Law}}
\newcommand{\weq}{\ = \ }
\newcommand{\wapprox}{\ \approx \ }
\newcommand{\wle}{\ \le \ }
\newcommand{\wge}{\ \ge \ }
\newcommand{\abs}[1]{{\lvert#1\rvert}}
\newcommand{\ceil}[1]{\left\lceil #1 \right\rceil}
\begin{document}

\title{Persistence in a large network of locally interacting neurons}

\author{
	Maximiliano Altamirano\footnote{
		Instituto de C\'alculo, Universidad de Buenos Aires/CONICET, Argentina, E-mail: \texttt{maximiliano.altamirano@ic.fcen.uba.ar}.
	},
	Roberto Cortez\footnote{
		Universidad Andr\'es Bello, Departamento de Matem\'aticas. E-mail: \texttt{roberto.cortez.m@unab.cl}. Supported by Iniciaci\'on Fondecyt Grant 11181082 and by Programa Iniciativa Científica Milenio through Nucleus Millenium Stochastic Models of Complex and Disordered Systems.
	},
	Matthieu Jonckheere\footnote{
		Instituto de C\'alculo, Universidad de Buenos Aires/CONICET, Argentina, E-mail: \texttt{mjonckhe@dm.uba.ar}.
	},
	Lasse Leskel\"a\footnote{
		Aalto University,
		School of Science,
		Department of Mathematics and Systems Analysis.
		Otakaari 1, 02015 Espoo, Finland.
		E-mail: 
		\texttt{lasse.leskela@aalto.fi}.
	}
}
\date{}
\maketitle

\begin{abstract}
	This article presents a biological neural network model driven by inhomogeneous Poisson processes accounting for the intrinsic randomness of synapses. The main novelty is the introduction of local interactions: each firing neuron triggers an instantaneous increase in electric potential to a fixed number of randomly chosen neurons.
	We prove that, as the number of neurons approaches infinity, the finite network converges to a nonlinear mean-field process characterised by a jump-type stochastic differential equation. We show that this process displays a phase transition: the activity of a typical neuron in the infinite network either rapidly dies out, or persists forever, depending on the global parameters describing the intensity of interconnection. This provides a way to understand the emergence of persistent activity triggered by weak input signals in large neural networks.
\end{abstract}

\bigskip

\noindent
\textbf{Keywords:} biological neural network,
metastability,
mean-field limit,
jump process,
interacting particle system,
phase transition, propagation of chaos,
nonlinear Markov process
\tableofcontents

\section{Introduction}

\subsection{Biological neural network models}

Since the seminal work by Lapicque \cite{lapicque1907recherches}, the body of mathematical literature on biological neural networks has become extensive. Nevertheless, several fundamental questions still remain open due to the the extreme complexity inherent to this biological system. Among these, an important one concerns the understanding of the phenomenon of \textit{persistence}: given a mild input, the network activity is sustained for surprisingly large times. It has been observed in a diverse set of brain regions and organisms, and it is considered as a mechanism for short-term storage \cite{galloway-woo-lu2008,major-tank2004,zylberberg-strowbridge2017}. This is one of our main interests here.

Before precising our contributions, we briefly recall some of the main different efforts of research concerning biological neural networks.
On one side of the spectrum, \textit{conductance-based models} deal in detail with the chemical interchanges between neurons when synapses occur. The Hodgkin-Huxley model is classical among those \cite{hodgkin1952quantitative}, and there has been a number of recent works within this framework, see \cite{baladron2012mean,touboul2014spatially,touboul2014propagation}. In general, these models lead to complex equations, which can be simplified in some specific cases \cite{touboul2012mean}. 
On the other side of the spectrum, there are the \textit{integrate-and-fire models}, Lapicque's model belonging to this large group, see Burkitt for reviews \cite{burkitt2006review,burkitt2006review2}. Considering the fast, stereotyped nature of neuron synapses, these models focus on the important problem of time distribution of spikes, which are seen as instantaneous events. In simple words, the membrane potential is the magnitude taken into account for each neuron, and it is considered to integrate the potential received by other neurons, until it reaches some threshold---usually fixed---which causes the neuron to fire spikes, received by other neurons in the network. In addition, \textit{leaky integrate-and-fire} (LIF) models take into account the decay of the membrane potential of neurons towards its resting state, due to the leak of potential across the cellular membrane when not interacting.

Most of the aforementioned works
model random effects through an external Gaussian process representing the aggregate effect of other neurons affecting the network. This allows Itô calculus to be applied when making computations and deriving results. There are, however, some several limitations regarding the classical integrate-and-fire models. For instance, these models do not take into account, in general, the intrinsic randomness of neurons, which is an essential feature that cannot be ignored, see for example \cite{rolls2010noisy}. Some recent models do include some form of intrinsic randomness by introducing white noise associated to each neuron \cite{delarue2015global}, which provides a partial solution regarding this topic. Another issue lies in the so called \textit{avalanche effect} caused by the presence of a fixed threshold above which the neurons deterministically fire. 
{More specifically, the neuron potential blows-up in finite time in the mean-field limit (see for instance \cite{caceres2011analysis}), which does not properly represent the behaviour of a large population of neurons.}

\textit{Linear-nonlinear-Poisson models} (LNP) take a step forward from LIF models, addressing the aforementioned issues in an effective way. The first LNP model was introduced by Chichilnisky \cite{chichilnisky2001simple}. These models represent the behaviour of spikes using inhomogeneous Poisson processes depending on
the membrane potential. The model presented by Robert and Touboul \cite{robert2016dynamics} belongs to this family and is the main inspiration of our framework. Our model is indeed a modification of the former and belongs to this family as well. LNP models seem to represent neuronal firing more accurately, display a good fit with experimental data \cite{pillow2005prediction,pillow2008spatio}, and avoid the problem of blow-up.
In this context, a well defined limiting behaviour is reached.
As underlined in \cite{robert2016dynamics}, the  associated mean-field limit follows a McKean--Vlasov equation where the dynamics of the process depends on mean quantities of the same process.

For diffusive underlying motions, McKean--Vlasov equations associated to
potentials have been extensively studied in the last decades.
See for instance the seminal paper \cite{BENACHOUR1998173} and many ulterior developments (see \cite{malrieu, tugaut} and references therein).
In most works stemming from statistical physics perspectives, convergence (as $N\to\infty$) of a particle system towards its associated mean-field limit is obtained for fixed times.
A convergence result of this kind is typically intrinsically linked to the asymptotic independence between particles, and is referred to as \textit{propagation of chaos}. See also \cite{villani}
for the use of the Wasserstein distance to prove propagation of chaos in that context. 
Let us note however that in these references, strong results on the speed of convergence towards hydrodynamic limits are usually proven in the case of a unique stationary measure and/or building on strong regularity property of the 
limiting PDE. Similarly, the literature of particle systems undergoing energy-preserving binary local (jump-type) interactions and their respective limiting equation is quite extensive, see for instance the seminal work \cite{kac1956}, and more recently \cite{mischler-mouhot2013}, and the references therein. In this setting, many results regarding convergence to equilibrium and propagation of chaos have been obtained. 

In contrast, the local dynamics of the model of the present article involves simultaneous jumps between 3 or more neurons (particles), with no preserved quantities (even on average). Thus, the limiting nonlinear equation that we obtain is of a different nature. For instance, as we shall see, the limit equation can have \textit{two stationary distributions}, depending on the parameters of the model. This has strong consequences for propagation of chaos and speed of convergence results, making them harder to prove and heavily dependent on model parameters. We remark that multiplicity of stationary distributions is also present in \cite{robert2016dynamics}, but without any rate of convergence under the stationary regime nor selection principle for the limiting stationary distribution. Some results concerning metastability properties of McKean--Vlasov equations having several stationary distributions are considered in \cite{carrillo2020long}.

Consequently, many of the classical techniques used in more physical models cannot be applied directly in the present context. Nevertheless, our propagation of chaos study relies on a recent coupling technique for binary interactions developed in \cite{cortez2016quantitative}, which we adapt and extend to our setting.

\subsection{Original aspect of our model and main contribution} 

Following Robert and Touboul \cite{robert2016dynamics}, we consider both a particle system describing potentials in a finite excitatory network of neurons, and a nonlinear stochastic process approximating the behaviour of a typical neuron in a large network.
Our model describes the action potential of a network of $N$ neurons over time, denoted $\mbf{X}_t = (X^1_t,\ldots,X_t^N)$. Each neuron $X^i_t\in\R_{+}$ is affected by an exponential decay towards its resting potential, set as 0.  Also, some random interactions take place within the network. Those occur when a neuron produces a \textit{spike}---also known as \textit{firing}---: it is instantaneously reset to its resting potential and, at the same time, it randomly selects a fixed number $\kappa \in \N$ of other neurons and gives them an excitatory impulse. Those events take place at a state-dependent rate.
In contrast with \cite{robert2016dynamics}, the dynamics we consider here is \textit{local}, in the sense that, in each firing event, the firing neuron communicates with a fixed number of other neurons ($\kappa$) and not all of them at once, and the magnitude of the spikes is of order 1 and not $1/N$. As a consequence, in the limit equation, the firing mechanism gives rise to a nonlinear jump term, and not a drift term as in \cite{robert2016dynamics}. We chose this modelling assumption firstly because it allows to model arguably more interesting dynamics from a biological point of view (a natural extension for future work is to consider an underlying graph structure between communicating neurons).
Secondly, it exhibits a different (and possibly more meaningful) phase transition concerning the long-time behaviour of the process, which has
a natural physical interpretation (see Section \ref{sec:phase_transition}).
Last but not least, the mathematical techniques needed to quantify neurons decorrelations are different from the ones in \cite{robert2016dynamics} and are possibly
generalizable to a larger class of similar models.

The main structure of our discussion is the following. First, we study the large-time behaviour of the finite network (Theorem \ref{the:wellpsdFinite}). We then characterize the mean-field limit process---which we call \textit{nonlinear process}--- and show its existence and uniqueness (Theorem \ref{the:well_posedness_Z}). We show that there is a simple phase transition in terms of the parameters of the model:
if the transmission of potential is strong enough (in terms of strength and frequency), then the limiting process is active forever; otherwise, it decays to 0 (Theorem \ref{the:kappa_alpha}).

We then study propagation of chaos: we prove convergence, for fixed time, of the empirical measure of the finite network towards its mean-field limit, when the number of neurons grows to infinity (Theorem \ref{the:MeanFieldApproximation}). To achieve this, a coupling argument is developed, in the same spirit as in \cite{cortez2016quantitative}. We start by defining a coupling $\mathbf{Z}_t = (Z_t^1,\ldots,Z_t^N)$ such that each $(Z_t^i)_t$ is a nonlinear process. In a second step, we define a coupling ``close'' to the first one, but with independent coordinates. Then, some computations with both couplings allow us to prove convergence in terms of the Wasserstein distance and to obtain interesting rates of convergence.

Finally, we show the following persistence phenomenon (Theorem \ref{the:persist}):
\begin{itemize}
	\item 
	if the transmission of potential is strong enough, then, even though the finite network
	activity dies in finite time for any $N$, it remains active at least for a time of order $\log(N)$.
	
	\item otherwise, the networks activity dies out at a speed independent of $N$.
\end{itemize}

The rest of the paper is organised as follows.
Section \ref{sec:model} describes the interacting neuron model and its hydrodynamic limit characterised by a McKean--Vlasov equation. Section \ref{sec:results} presents the main results: the behaviour of the network for finite $N$, the well-posedness of the limiting stochastic differential equation, the phase transition exhibited by the limiting process in terms of global parameters representing the intensity of connection,
the convergence as $N\to\infty$ of the empirical measure of the finite network towards the mean-field, and the persistence result. Section \ref{sec:proofs} gives detailed proofs. Finally, in Section \ref{sec:future_work} we mention possible further lines of research that might stem from this work.

\section{Model description}
\label{sec:model}

A system of $N$ neurons is modelled as an interacting particle system where the potential of each neuron is represented as a stochastic process with values in $\R_+ = [0,\infty)$. We present three viewpoints to this particle system: piecewise deterministic Markov process (Section \ref{sec:PiecewiseDeterministicMP}), a solution of a stochastic differential equation driven by Poisson noise (Section~\ref{sec:PoissonSDE}), and a discrete-time Markov chain obtained by sampling the system at firing instants (Section~\ref{sec:DTMC}). Thereafter, Section~\ref{sec:MeanField} presents a simplified McKean--Vlasov stochastic differential equation which is used to model systems with a large number of neurons. The model is characterised by four parameters:

\begin{itemize}
	\item decay rate $\mu>0$,
	
	\item firing rate proportionality constant $\gamma>0$,
	
	\item impulse range $\kappa\in\N$ (number of neurons excited at a firing event),
	
	\item impulse magnitude $\rho>0$.
\end{itemize}

In what follows, we always assume that the initial condition $\mbf{X}_0 = (X_0^1,\ldots,X_0^N)$ is a collection of i.i.d.\ copies of a given random variable $Z_0 \in \R_+$. Consequently, $\mbf{X}_t = (X_t^1,\ldots,X_t^N)$ is exchangeable for all $t\geq 0$, in the sense that its distribution is invariant with respect to permuting the neuron indices.

\subsection{Definition as a piecewise deterministic Markov process}
\label{sec:PiecewiseDeterministicMP}

Consider a system of $N$ neurons having potential levels $X^1_t, \dots, X^N_t \ge 0$ at time $t$. Level zero indicates a resting potential. Each neuron has a decay rate $\mu > 0$ and a firing rate $\gamma > 0$. In the absence of firings, the level of each neuron decays according to $\frac{d}{dt} X^i_t = - \mu X^i_t$.  Firings of neuron $i$ are triggered at the events of an inhomogeneous Poisson process with state-dependent intensity $\gamma X^i_t$, independently of other neurons.  When a neuron fires, it jumps to zero, and it selects $\kappa \ge 1$ other neurons uniformly at random, which instantly increase their levels by $\rho > 0$ units. 
We note that, unlike in \cite{robert2016dynamics}, there is no scaling (in $N$) of any of the model parameters $\kappa, \rho, \mu, \gamma$.

The trajectory of $\mathbf{X}_t = (X^1_t, \dots, X^N_t)$ is a piecewise deterministic continuous-time Markov process \cite{Davis_1984} on state space $[0,\infty)^N$ with flow 
$
(\mbf{x},t) \mapsto \mbf{x} e^{-\mu t},
$ 
jump rate
$
\mbf{x} \mapsto \gamma \norm{\mbf{x}}
$ 
where $\norm{\mbf{x}} = \sum_i \abs{x_i}$, and jump kernel
\[
(\mbf{x},A) \mapsto \sum_i \sum_{K: K \not\ni i} \frac{x^i}{\norm{\mbf{x}}} \binom{N-1}{\kappa}^{-1} \delta_{\mbf{x}-x^i e_i + \rho e_K}(A),
\]
where the second sum on the right is over subsets $K \subset \{1,\dots, N\}$ of size $\kappa$,
$e_i$ denotes the $i$-th unit vector and $e_K = \sum_{j\in K} e_j$.

\subsection{Description as a solution of a stochastic differential equation}
\label{sec:PoissonSDE}

Alternatively, the system trajectory relative to a given initial state can be represented as the a.s.\ unique strong solution of a system of stochastic differential equations. Hence, for any $i=1,\ldots,N$, the behaviour of $X_t^i$ can be represented by
\begin{equation}
	\label{eq:dXti2ndVersion}
	\begin{aligned}
		dX_t^i
		&= -\mu X_{t^\-}^i dt - X^i_{t^\-} \sum_{K: K \not\ni i} \int_0^\infty \ind{u \le \gamma X^i_{t^\-}} \cN_{K}(dt,du,(i-1,i]) \\
		&\quad + \rho \sum_{K: K \ni i} \int_0^\infty \int_0^N \ind{u \le \gamma X^{\lceil\xi\rceil}_{t^\-}} \cN_{K}(dt,du,d\xi),
	\end{aligned}
\end{equation}
in which all sums involving $K$ are taken with respect to subsets of $\{1,\dots,N\}$ of size $\kappa$, and $\cN_{K}(dt,du,d\xi)$, $K \subset \{1,\dots, N\},$ are mutually independent Poisson random measures on $\R_+ \times \R_+\times (0,N]$ with common intensity measure
\[
\binom{N-1}{\kappa}^{-1} dt \, du \,  \ind{\lceil\xi\rceil\notin K}d\xi.
\]
Any atom $(T,U,\Xi)$ of the Poisson random measure $\cN_{K}$ corresponds to a possible firing where the neurons in $K$ are excited: $T \ge 0$ is the time of the jump, the mark $U \ge 0$ is used to model inhomogeneities, and $\Xi \in (0,N]$ is used to select uniformly the neuron $i = \lceil\Xi\rceil \notin K$ that fires.

\subsection{Construction using a discrete-time embedded Markov chain}
\label{sec:DTMC}

We now give a third construction for our model using an embedded Markov chain, which is especially well suited for simulating the network.

A system started at state $\mbf{x} = (x^1,\dots, x^N)$ evolves as follows. 
Observe first that the total firing rate in state $\mbf{X}_t$ equals $\gamma \norm{\mbf{X}_t}$, and as long as firings do not occur, the sum of potentials decays according to $\norm{\mbf{X}_t} = \norm{\mbf{x}} e^{-\mu t}$. Therefore, the probability that there are no firings during $[0,t]$ is the same as the probability that an inhomogeneous Poisson point process with intensity $\lambda_t = \gamma \norm{\mbf{x}} e^{-\mu t}$ has no jumps during $[0,t]$. Because the number of such jumps during $[0,t]$ is Poisson distributed with mean $\int_0^t \lambda_s ds = (\gamma/\mu) \norm{\mbf{x}} (1-e^{-\mu t})$, we see that the distribution of the first firing time $\tau$ is characterised by
\begin{equation}
	\label{eq:FirstFiringTime}
	\pr(\tau > t)
	\weq e^{ - (\gamma/\mu) \norm{\mbf{x}} (1-e^{-\mu t}) },
	\qquad 0 \le t < \infty.
\end{equation}
In particular, the system never fires with probability $\pr(\tau = \infty) = e^{ - (\gamma/\mu) \norm{\mbf{x}}}$. Another key observation is that the proportions of neuron levels remain constant in the absence of firings. Therefore, given $\tau < \infty$, the label of the first firing neuron $I$ is distributed according to
\begin{equation}
	\label{eq:FirstFiringNeuron}
	\pr(I = i) \weq \frac{x^i}{\norm{\mbf{x}}},
	\qquad i=1,\dots,N.
\end{equation}

Using \eqref{eq:FirstFiringTime}--\eqref{eq:FirstFiringNeuron}, we can formally define the trajectory of a system starting at $\mbf{X}_0 \in \R_+^N$ as follows. We denote by $\mbf{\hat X}_k$ the state of the system immediately after the $k$-th firing, and by $\hat T_k \in [0,\infty]$ the time of the $k$-th firing, with $\hat T_k = \infty$ indicating that there are fewer than $k$ firings in total. The random variables $(\mbf{\hat X}_k, \hat T_k)$, $k=0,1,\dots$, are recursively defined as follows, starting with $(\mbf{\hat X}_0, \hat T_0) = (\mbf{X}_0,0)$. For any $k \ge 1$, set $\mbf{x} = \mbf{\hat X}_{k-1}$ and sample the $k$-th waiting time $\tau_k$ from distribution \eqref{eq:FirstFiringTime}. If $\tau_k = \infty$, set $(\mbf{\hat X}_k, \hat T_k) = (0,\infty)$. Otherwise sample the label of the $k$-th firing neuron $I_k$ from distribution \eqref{eq:FirstFiringNeuron}, and then sample a set of excited neurons $K_k$ uniformly at random from the subsets of $\{1,\dots,N\} \setminus \{I_k\}$ of size $\kappa$. Then set
\begin{equation}
	\label{eq:SystemDT}
	\big( \mbf{\hat X}_k, \, \hat T_k \big)
	\weq \big( (\mbf{x} - x^{I_k} e_{I_k})e^{-\mu \tau_k} + \rho e_{K_k}, \ \hat T_{k-1} + \tau_k \big),
\end{equation}
The full continuous-time trajectory of the system is then defined by
\begin{equation}
	\label{eq:SystemCT}
	\mbf{X}_t
	\weq \mbf{\hat X}_k e^{-\mu (t - \hat T_k)}
	\qquad \text{for} \quad \hat T_k \le t < \hat T_{k+1}.
\end{equation}
Equations \eqref{eq:SystemDT}--\eqref{eq:SystemCT} provide a simple way to simulate system trajectories. Sampling from the distribution \eqref{eq:FirstFiringTime} can be done by $\tau = -\frac{1}{\mu} \log (  1 - \frac{\mu}{\gamma \norm{\mbf{x}}} \xi)_+$ where $\xi$ is exponentially distributed with mean one and $\log 0 = -\infty$, so that $\tau=\infty$ when $\xi \ge \frac{\gamma \norm{\mbf{x}}}{\mu}$. We also find that $\lim_{t \to \infty} \mbf{X}_t = 0$ if and only if the total number of firings is finite.


\subsection{Mean-field process}
\label{sec:MeanField}

When the number of neurons $N$ is large, we may heuristically derive a mean-field approximation of the system as follows.
Fix a reference neuron $i$. In the absence of firings, the neuron level decays exponentially according to $\frac{d}{dt} X^i_t = - \mu X^i_t$, and the neuron fires at rate $\gamma X^i_t$. When any other neuron fires, neuron $i$ is excited with probability $\binom{1}{1} \binom{N-2}{\kappa-1} \binom{N-1}{\kappa}^{-1} = \frac{\kappa}{N-1}$. Assuming that the empirical sample average $\frac{1}{N}\sum_j X^j_t$ of the neuron levels is close to the mathematical expectation $\E [X^i_t]$, the average rate at which neuron $i$ is excited is approximately
\[
\gamma \frac{\kappa}{N-1}
\sum_{j \ne i} X^j_t 
\wapprox \gamma \kappa \E [X^i_t].
\]
This suggests that the evolution of a reference neuron in a large system can be approximated by the solution of the McKean-Vlasov equation
\begin{equation}
	\label{eq:dZt}
	\begin{aligned} 
		dZ_t
		&\weq - \mu Z_t dt
		- Z_{t^\-} \int_0^\infty \ind{u \le \gamma Z_{t^\-}} \cN(dt,du) \\
		&\quad + \rho \int_0^\infty \ind{u \le \gamma \E [Z_{t^\-}]} \cM(dt,du),
	\end{aligned}
\end{equation}
in which $\cN(dt,du)$ and $\cM(dt,du)$ denote independent Poisson random measures on $\R_+^2$ with intensities $dt du$ and $\kappa dt du$, respectively. 

Note that the third term in \eqref{eq:dZt}, which results from the local firing mechanism, is of jump-type. As mentioned before, this is a key difference from the model in \cite{robert2016dynamics}, in which the firing mechanism is global and gives rise to a drift term. Notice also that the rate at which $Z_t$ jumps upwards depends on its own expected value; it is in this sense that the process is nonlinear.

\section{Main results}
\label{sec:results}

\subsection{Long-term behaviour of the finite-neuron system}

Theorem \ref{the:wellpsdFinite} below tells us that in a finite system of neurons, all neuron potentials converge to zero, regardless of initial state and model parameters. This result also implies that the unique invariant probability distribution of the $\R_+^N$-valued Markov process is the Dirac measure at $\mbf{0} \in \R_+^N$. However, the nature of this convergence will vary drastically, depending on the parameter values.
For future reference, we define a reproduction number
\begin{equation}
	\label{eq:ReproductionNumber}
	\theta
	\weq \kappa (1-e^{-\rho \gamma /\mu}),
\end{equation}
which will play a crucial role in our study. See Section \ref{sec:phase_transition} for a discussion on the interpretation of this quantity.

\begin{theorem}[Long-term behaviour of the finite network]
	\label{the:wellpsdFinite}
	For any initial state, the process $\mbf{X}_t$ converges according to $\lim_{t \to \infty} \mbf{X}_t = \mbf{0}$ almost surely.
	Moreover, when $\theta<1$, the convergence of any particular neuron $i$ occurs exponentially fast (independent of $N$) according to
	\begin{equation}
		\label{eq:wellpsdFinite}
		\E [\omega(X_t^i, 0)]
		\wle \E [\omega(X_0^i, 0)] \, e^{-(1-\theta)\mu t}
	\end{equation}
	for the metric
	\begin{equation}
		\label{eq:Metric}
		\omega(x,y)
		\weq 1 - e^{- (\gamma/\mu)\abs{x-y}}.
	\end{equation}
\end{theorem}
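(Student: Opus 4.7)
My plan is to handle the two assertions separately: the quantitative exponential bound when $\theta<1$, and the qualitative almost sure convergence.

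For the exponential bound, I would apply the infinitesimal generator of the PDMP to the single-coordinate test function $g(\mbf{x}) = f(x^i)$ with $f(x) = 1 - e^{-(\gamma/\mu)x} = \omega(x,0)$. The flow part contributes $-\gamma x^i e^{-(\gamma/\mu)x^i}$; the firing of $i$ itself (rate $\gamma x^i$) contributes $\gamma x^i(e^{-(\gamma/\mu)x^i}-1)$; and a firing of some $j\ne i$ contributes, after averaging over the excited set $K$ which contains $i$ with probability $\kappa/(N-1)$,
\[
\frac{\kappa\gamma}{N-1}\bigl(1 - e^{-\gamma\rho/\mu}\bigr)\,e^{-(\gamma/\mu)x^i}\sum_{j\ne i} x^j.
\]
The first two terms telescope to $-\gamma x^i$; bounding $e^{-(\gamma/\mu)x^i}\le 1$, taking expectations, and using exchangeability of $\mbf{X}_t$ to rewrite $\E\bigl[\sum_{j\ne i}X_t^j\bigr] = (N-1)\E[X_t^i]$, I arrive at $\tfrac{d}{dt}\E[f(X_t^i)] \le -\gamma(1-\theta)\E[X_t^i]$. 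Finally, the elementary inequality $(\gamma/\mu)x \ge 1 - e^{-(\gamma/\mu)x}$ for $x\ge 0$ yields $\E[X_t^i]\ge (\mu/\gamma)\E[f(X_t^i)]$, whence $\tfrac{d}{dt}\E[f(X_t^i)] \le -(1-\theta)\mu\E[f(X_t^i)]$, and Grönwall gives~\eqref{eq:wellpsdFinite}.

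For the almost sure convergence with no restriction on parameters, I would pass to the embedded jump chain $\mbf{\hat X}_k$ from Section~\ref{sec:DTMC}. By~\eqref{eq:FirstFiringTime}, conditional on $\mbf{\hat X}_k$ the next waiting time $\tau_{k+1}$ is infinite with probability $e^{-(\gamma/\mu)\norm{\mbf{\hat X}_k}}$, and once any such waiting time becomes infinite the continuous-time process decays deterministically to $\mathbf{0}$. Hence it suffices to show
\[
\PP(\text{infinitely many firings}) \weq \E\Big[\prod_{k\ge 0}\bigl(1 - e^{-(\gamma/\mu)\norm{\mbf{\hat X}_k}}\bigr)\Big] \weq 0,
\]
and via the inequality $1 - p \le e^{-p}$ this reduces to proving $\sum_k e^{-(\gamma/\mu)\norm{\mbf{\hat X}_k}}=+\infty$ almost surely on the event of infinite firings. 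The recursion $\norm{\mbf{\hat X}_k} = (\norm{\mbf{\hat X}_{k-1}} - X^{I_k})e^{-\mu\tau_k} + \kappa\rho$ exhibits a uniformly negative drift when $\norm{\mbf{\hat X}_{k-1}}$ is large: $\tau_k$ then concentrates near $0$, while the size-biased sampling in~\eqref{eq:FirstFiringNeuron} gives $\E[X^{I_k}\mid \mathcal{F}_{k-1}] \ge \norm{\mbf{\hat X}_{k-1}}/N$, whereas the inflation term is only $\kappa\rho$. A Foster--Lyapunov argument with $V(\mbf{x})=\norm{\mbf{x}}$ then forces $\norm{\mbf{\hat X}_k}$ to visit some bounded set $\{\norm{\mbf{x}}\le R\}$ infinitely often a.s., along which the summand is uniformly at least $e^{-(\gamma/\mu)R}$, producing the desired divergence.

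The main obstacle lies in this second part: the naive deterministic estimate $\norm{\mbf{\hat X}_k}\le \norm{\mbf{\hat X}_0} + k\kappa\rho$ makes the crucial series geometrically convergent in the worst case, so the tightness of the embedded chain genuinely has to be established through the drift inequality rather than any pathwise comparison. The first part is then comparatively routine: the bounded concave test function $f$ converts the generator identity into an inequality closeable via Grönwall, without requiring any higher-moment control on the potentials.
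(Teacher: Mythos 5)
Your quantitative bound for $\theta<1$ is essentially the paper's own argument: the paper applies the generator identity \eqref{eq:DriftGeneral} to $\phi(\mbf{x})=e^{-(\gamma/\mu)x_i}$ rather than to $1-e^{-(\gamma/\mu)x_i}$, but the cancellation of the flow term against the reset term, the bound $e^{-(\gamma/\mu)x^i}\le 1$ on the excitation term, exchangeability, and the final use of $(\gamma/\mu)x\ge 1-e^{-(\gamma/\mu)x}$ followed by Gr\"onwall are all the same steps. The almost sure convergence is where you genuinely diverge. The paper stays in continuous time: it shows that $H_t=e^{-(\gamma/\mu)\norm{\mbf{X}_t}}$ is a bounded submartingale, so $\norm{\mbf{X}_t}$ converges a.s.\ in $[0,\infty]$; it then rules out an infinite limit via the Jensen-type inequality $\frac{d}{dt}\E[\norm{\mbf{X}_t}]\le(\gamma\rho\kappa-\mu)\E[\norm{\mbf{X}_t}]-\gamma N^{-1}(\E[\norm{\mbf{X}_t}])^2$, which keeps the first moment bounded, and finally excludes positive finite limits because $\mbf{0}$ is the only fixed point of the flow. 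Your route through the embedded chain is sound: size-biased selection of the firing neuron gives $\E[x^{I_k}]=\sum_i(x^i)^2/\norm{\mbf{x}}\ge\norm{\mbf{x}}/N$ by Cauchy--Schwarz, hence a drift of at most $-\norm{\mbf{x}}/N+\kappa\rho$ for $\norm{\mbf{\hat X}_k}$, hence recurrence of a ball of radius of order $N\kappa\rho$ on the survival event, hence divergence of $\sum_k e^{-(\gamma/\mu)\norm{\mbf{\hat X}_k}}$ and extinction by the conditional Borel--Cantelli lemma; note you do not even need the remark that $\tau_k$ concentrates near $0$, since $e^{-\mu\tau_k}\le1$ only improves the drift. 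The paper's approach buys brevity and reuses the Lyapunov function $e^{-(\gamma/\mu)\norm{\cdot}}$ that also underlies the metric $\omega$; yours buys the slightly more informative conclusion that the total number of firings is a.s.\ finite, avoids the Fatou/fixed-point step, but requires writing out the Foster--Lyapunov argument carefully for a sub-Markov kernel (absorbing at a cemetery state when $\tau_k=\infty$).
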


The global decay property described in Theorem~\ref{the:wellpsdFinite} is remarkable because it is valid for arbitrarily large values of the firing rate $\gamma$.
To appreciate this feature, we note that in a slightly modified system in which the potentials of firing neurons remain constant instead of being reset to zero, the associated Markov process $(\mbf{Y}_t)_{t\geq 0}$ satisfies $\frac{d}{dt} \E [\norm{\mbf{Y}_t}] = (\rho\kappa\gamma - \mu) \E [\norm{\mbf{Y}_t}]$, so that the modified system remains bounded in mean only for $\gamma < \frac{\mu}{\rho\kappa}$.

\subsection{Phase transition in the mean field}
\label{sec:phase_transition}

As the number of neurons $N \to \infty$, one expects that any single neuron in the network converges to a nontrivial limit process driven by \eqref{eq:dZt}.
The well-posedness of the SDE \eqref{eq:dZt} representing the mean-field approximation is confirmed by the following result.

\begin{theorem}[Well-posedness of the mean-field SDE]
	\label{the:well_posedness_Z}
	For any initial state $Z_0 \in \R_+$, there exists a unique strong solution $(Z_t)_{t \ge 0}$ to \eqref{eq:dZt}.
\end{theorem}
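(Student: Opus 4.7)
The plan is to remove the nonlinearity by freezing the mean function, construct pathwise solutions of the resulting linear jump-SDE, and then close the loop via a fixed-point argument on the mean.

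\textbf{Step 1 (Frozen-mean SDE).} For any measurable locally bounded $m:[0,\infty)\to\R_+$, consider the linear SDE obtained from \eqref{eq:dZt} by replacing $\E[Z_{t^-}]$ with $m(t)$. This is a piecewise-deterministic Markov process: between jumps $Z^m$ decays exponentially at rate $\mu$, downward resets to zero occur at state-dependent rate $\gamma Z_t^m$, and upward jumps of size $\rho$ occur at deterministic rate $\kappa\gamma m(t)$. Enumerating the atoms of $\cN$ and $\cM$ in chronological order gives a pathwise construction of a unique strong solution, once one notes that on $[0,T]$ the total jump count is a.s.\ finite (it is dominated in expectation by $\kappa\gamma\int_0^T m(s)\,ds$ plus the downward resets triggered between consecutive upward jumps, themselves geometric).

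\textbf{Step 2 (Lipschitz-in-$m$ estimate via coupling).} Given two inputs $m_1,m_2$, couple $Z^{m_1},Z^{m_2}$ by driving both with the \emph{same} Poisson measures $\cN,\cM$. Inspecting the four possible jump scenarios for $|Z^{m_1}_t-Z^{m_2}_t|$, the $\cN$-contributions cancel exactly: a ``both reset'' event (rate $\gamma(Z^{m_1}\wedge Z^{m_2})$) eliminates the current discrepancy, while an ``only one resets'' event (rate $\gamma|Z^{m_1}-Z^{m_2}|$) increases it by $Z^{m_1}\wedge Z^{m_2}$, giving matching expected contributions with opposite signs. The $\cM$-contribution is bounded by $\rho\kappa\gamma|m_1(t)-m_2(t)|$, and between jumps the difference contracts at rate $\mu$. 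Combining yields
$$\frac{d}{dt}\,\E|Z_t^{m_1}-Z_t^{m_2}|\ \le\ -\mu\,\E|Z_t^{m_1}-Z_t^{m_2}|\ +\ \rho\kappa\gamma\,|m_1(t)-m_2(t)|,$$
and Grönwall's lemma gives
$$\bigl|\E Z_t^{m_1}-\E Z_t^{m_2}\bigr|\ \le\ \rho\kappa\gamma\int_0^t e^{-\mu(t-s)}|m_1(s)-m_2(s)|\,ds.$$

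\textbf{Step 3 (Fixed point and uniqueness).} Define $\Phi: m\mapsto \E[Z^m]$ on $C([0,T],\R_+)$. The a priori bound $\frac{d}{dt}\E Z_t^m \le -\mu\E Z_t^m+\rho\kappa\gamma m(t)$ from Dynkin's formula shows $\Phi$ leaves a suitable ball invariant, and Step 2 shows that the $n$-th iterate of $\Phi$ satisfies $\|\Phi^n m_1-\Phi^n m_2\|_\infty \le (\rho\kappa\gamma T)^n/n!\,\|m_1-m_2\|_\infty$. For $n$ large this is a strict contraction, so Banach's theorem yields a unique fixed point $m^*$, and $Z^{m^*}$ solves \eqref{eq:dZt} on $[0,T]$. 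Conversely, any strong solution $Z$ of \eqref{eq:dZt} gives $m=\E[Z]$, which by Step 2 must equal $m^*$, and then $Z=Z^{m^*}$ by Step 1. Since $T$ is arbitrary, uniqueness and existence extend to $[0,\infty)$.

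The main obstacle is Step 2: a priori the downward-reset mechanism looks dangerous, because a partial-reset event at time $t$ can replace a small discrepancy $|Z^{m_1}-Z^{m_2}|$ with the much larger residual value $Z^{m_1}\vee Z^{m_2}$. The proof hinges on the algebraic identity that, in expectation, the jump-up of the discrepancy caused by partial resets is exactly matched by the jump-down produced by simultaneous resets of both coordinates, leaving only the comparatively benign $\cM$-discrepancy $\rho\kappa\gamma|m_1-m_2|$ to control.
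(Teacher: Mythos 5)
Your proposal is correct and follows essentially the same route as the paper's proof: freeze the mean to get a linear jump-SDE, derive the synchronous-coupling estimate $\frac{d}{dt}\E|Z_t^{m_1}-Z_t^{m_2}|\le -\mu\,\E|Z_t^{m_1}-Z_t^{m_2}|+\rho\kappa\gamma|m_1(t)-m_2(t)|$, and close with a Picard iteration showing $\cA^n$ is a contraction on $C([0,T],\R_+)$. The only cosmetic differences are that the paper builds the frozen-mean solution by truncating the reset rate rather than by chronological enumeration of atoms, and that it records the reset contribution exactly as the nonpositive term $-\gamma\,\E\bigl[|Z_t^{m_1}-Z_t^{m_2}|^2\bigr]$ rather than as the verbal cancellation you describe (which is slightly imprecise but harmless, since you only need the inequality).
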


The main result of this work is the following description of a phase transition of the mean-field limiting process, characterised by the reproduction number $\theta$ defined by \eqref{eq:ReproductionNumber}.

\begin{theorem}[Phase transition in the mean-field limit]
	\label{the:kappa_alpha}
	Assume that $\E [Z_0] > 0$ and $\E[Z_0^2] < \infty$. Then $(Z_t)_{t \ge 0}$ is uniformly integrable, and the following phase transition holds:
	\begin{itemize}
		\item
		If $\theta < 1$, then $\E[Z_t] \to 0$ and $\int_0^\infty \E[Z_t] \, dt < \infty$.
		
		\item If $\theta = 1$, then
		$\E[Z_t] \to 0$ and $\int_0^\infty \E [Z_t] \, dt = \infty$.
		
		\item If $\theta > 1$, then $\inf_{t \ge 0} \E[Z_t] >0$.
	\end{itemize}
	Moreover, for $\theta < 1$, we have $Z_t \to 0$ almost surely, and this convergence is exponentially fast in the metric $\omega$ defined by \eqref{eq:Metric}, according to
	\begin{equation}
		\label{eq:Erho}
		\E [\omega(Z_t,0)] 
		\wle \E [\omega(Z_0,0)] e^{-(1-\theta)\mu t}.
	\end{equation}
\end{theorem}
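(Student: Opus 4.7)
My plan is to analyse the pair $(m(t), h(t))$ with $m(t) := \E[Z_t]$ and $h(t) := \E[\omega(Z_t,0)] = \E[1 - e^{-(\gamma/\mu) Z_t}]$; the choice of $\omega(\cdot,0)$ as Lyapunov function is tailored so that Itô's formula applied to \eqref{eq:dZt} yields a closed ODE. Writing $f(z) = 1 - e^{-(\gamma/\mu)z}$, the drift term $-\mu z f'(z) = -\gamma z e^{-(\gamma/\mu)z}$ and the self-firing reset contribution $\gamma z(f(0)-f(z)) = -\gamma z + \gamma z e^{-(\gamma/\mu)z}$ combine to $-\gamma z$, while the upward-jump term contributes $\kappa\gamma m(t)(f(z+\rho)-f(z)) = \theta\gamma m(t)(1-f(z))$ thanks to $\kappa(1-e^{-\gamma\rho/\mu}) = \theta$. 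Taking expectation:
\begin{equation*}
\frac{dh}{dt} \weq \gamma m(t)\bigl(\theta - 1 - \theta h(t)\bigr).
\end{equation*}
Concavity of $f$ and Jensen give $h \le f(m)$, hence $m \ge (\mu/\gamma) h$, which is the key one-sided link between the two quantities.

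First I would establish uniform integrability. The same Dynkin-type computation applied to $z$ and $z^2$ yields $\frac{dm}{dt} \le (\kappa\rho\gamma - \mu)m - \gamma m^2$ (using $\E[Z_t^2] \ge m^2$) and $\frac{d}{dt}\E[Z_t^2] \le A\E[Z_t^2] - \gamma \E[Z_t^2]^{3/2} + B m(t)$ (using Lyapunov's inequality $\E[Z_t^3] \ge \E[Z_t^2]^{3/2}$). The first is logistic, so $m$ is globally bounded. Plugging this bound into the second, the super-linear dissipation $-\gamma x^{3/2}$ forces $\sup_t \E[Z_t^2] < \infty$, implying UI of $\{Z_t\}$.

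For the phase transition I would treat the three regimes using the ODE for $h$. \emph{Subcritical ($\theta < 1$):} the factor $\theta - 1 - \theta h$ is strictly negative, and $\gamma m \ge \mu h$, so that $dh/dt \le \mu h(\theta-1)$; Gronwall yields \eqref{eq:Erho}, and integrating the ODE over $[0,\infty)$ rearranges to $(1-\theta)\gamma \int_0^\infty m \le h(0) < \infty$. To upgrade $h \to 0$ to $m \to 0$ I would use UI: for arbitrary $\epsilon > 0$ choose $M$ with $\sup_t \E[Z_t; Z_t > M] \le \epsilon$; concavity of $f$ yields $z \le (M/f(M)) f(z)$ on $[0,M]$, so $m(t) \le (M/f(M)) h(t) + \epsilon$. \emph{Critical ($\theta = 1$):} the ODE becomes $dh/dt = -\gamma m h$, so $h$ is nonincreasing with $h(t) = h(0)\exp(-\gamma\int_0^t m)$; if $\lim h > 0$ then $m \ge (\mu/\gamma)\lim h > 0$, forcing $\int m = \infty$ and thus $h \to 0$, a contradiction, so $h \to 0$, and the same formula then forces $\int m = \infty$. \emph{Supercritical ($\theta > 1$):} fix $\epsilon \in (0, (\theta-1)/\theta)$; whenever $h(t) \le \epsilon$ we have $dh/dt \ge \mu h(\theta-1-\theta\epsilon) > 0$, so $h$ strictly increases below $\epsilon$, and continuity of $h$ rules out any later crossing below $\epsilon$, giving $\inf_t h(t) \ge \min(h(0),\epsilon) > 0$ and hence $\inf_t m(t) \ge (\mu/\gamma)\inf_t h(t) > 0$.

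Finally, for the almost-sure convergence in the subcritical regime, the geometric decay of $h$ together with Markov's inequality yields $\sum_n \Pr(Z_n > \epsilon) \le \sum_n h(n)/\omega(\epsilon,0) < \infty$, so $Z_n \to 0$ a.s.\ along integers by Borel--Cantelli. Since the expected total number of upward jumps equals $\kappa\gamma \int_0^\infty m(t)\,dt < \infty$, a second Borel--Cantelli argument ensures that almost surely only finitely many upward jumps ever occur; beyond the last one $(Z_t)$ only decays exponentially or is reset to $0$, yielding $Z_t \to 0$ a.s. The main obstacle I anticipate is the uniform integrability step, which in the supercritical moment regime $\kappa\rho\gamma > \mu$ requires genuinely exploiting the super-linear $-\gamma \E[Z_t^3]$ dissipation coming from the self-firing reset; once UI is in hand, the rest follows cleanly from the closed ODE for $h$ and the one-sided bound $\gamma m \ge \mu h$.
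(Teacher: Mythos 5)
Your proposal is correct and follows essentially the same route as the paper: uniform integrability via the logistic bound on $\E[Z_t]$ and the dissipative second-moment equation, the closed ODE for $\E[e^{-(\gamma/\mu)Z_t}]$ (you work with its complement $h=\E[\omega(Z_t,0)]$), the pointwise/Jensen link $\gamma m\ge\mu h$, and the finitely-many-upward-jumps argument for almost sure convergence. The only difference is presentational — the paper integrates the ODE explicitly and reads off $h_\infty=\min(1,1/\theta)$, whereas you use differential inequalities and a barrier argument in the supercritical case — and your closing worry is slightly misplaced: the quadratic dissipation $-\gamma\E[Z_t^2]$ handles the first moment, while the second-moment equation is already linearly dissipative ($-2\mu m_2$), so the cubic term is not actually needed there.
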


Thus, \textbf{the mean field exhibits nontrivial long-run behaviour if and only if $\theta > 1$}. A heuristic explanation for this is the following. 
In analogy with \eqref{eq:FirstFiringTime}, we observe that $p = 1-e^{-\rho\gamma/\mu}$ equals the probability that a neuron which gets excited from the resting potential will eventually fire. In a large system where initially an overwhelming majority of neurons are resting, the number of eventually firing neurons excited by a firing neuron is approximately binomially distributed with success probability $p$ and mean $\theta = \kappa p$. An associated Galton--Watson branching process may survive forever if and only if $\theta > 1$. Therefore:
\begin{itemize}
	\item When $\theta >1$, the resets and firings have enough frequency and power to sustain themselves forever.
	
	\item If $\theta < 1$, then fewer and fewer firings occur as time grows, and eventually the process dies out due to the exponential decay or a final reset.
	
	\item Interestingly enough, in the critical case $\theta=1$, even though $\E[Z_t]\to 0$, the process $Z_t$ never \textit{really} dies: firings do become more infrequent with time, but, since $\int_0^\infty \E[Z_t] dt = \infty$, they never actually stop.
\end{itemize}

\begin{remark}
	\label{rmk:comparison_phase_transition_RT}
	The model of Robert and Touboul also exhibits a similar phase transition, but the condition at which it takes place is different from ours. Indeed, in their model, for the case of a linear firing-rate function, the phase transition occurs when the quantity $\theta_c = \kappa \rho \gamma/\mu$ exceeds 1, see \cite[Section 7.2.1]{robert2016dynamics} (we used the notation of the present paper; note that, because the parameter $\kappa$ has no equivalent in their setting, we have interpreted $\kappa \rho$ as the magnitude of the excitatory impulse in their model, before rescaling by $1/N$). Because $\theta \le \theta_c$, we see that persistence in our model requires a higher value of $\rho \gamma /\mu$, i.e., the magnitude of the excitatory impulse times the ratio of firing rate to decay rate. This difference is a consequence of the local nature of the firing mechanism we consider here, and it is the most distinctive and interesting feature of our model. However, our upper bounds for the finite network (Theorems~\ref{the:MeanFieldApproximation} and~\ref{the:persist}) indicate that $\theta_c$ might have have relevance also for the present model.
\end{remark}

\begin{remark}
	\label{remark:conjecture}
	Theorem~\ref{the:kappa_alpha} implies that if $\E[Z_0]>0$ and $\theta>1$, then any weak limit of $Z_t$ satisfies $\E[Z_\infty] > 0$. We conjecture that for each $\theta>1$, the process $Z_t$ admits exactly two stationary distributions:
	the trivial one (Dirac mass at 0), which is reached only when
	$Z_0=0$ almost surely, and a second, nontrivial distribution,
	towards which $Z_t$ approaches whenever $\E[Z_0]>0$. This nontrivial stationary distribution could be computed through a fixed point procedure.
	The construction in Section~\ref{sec:DTMC} gives a simple way to approximately simulate this distribution.
\end{remark}

As a consequence of Theorem \ref{the:kappa_alpha}, we obtain the following result regarding the
limiting proportion of neurons at the resting potential in the infinite network.

\begin{corollary}
	\label{cor:kappa_alpha}
	The process $Z_t$ satisfies for all $t \ge 0$
	\begin{equation}
		\label{eq:cor:kappa_alpha}
		\PP(Z_t=0)
		\weq \frac{1}{\kappa} + \left( \PP(Z_0=0) - \frac{1}{\kappa}\right) e^{-\gamma \kappa \int_0^t \E[Z_s] ds}.
	\end{equation}
	Consequently, $\lim_{t\to \infty} \PP(Z_t=0) = \frac{1}{\kappa}$ when $\E[Z_0]>0$ and $\theta \ge 1$.
\end{corollary}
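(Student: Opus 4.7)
The strategy is to derive a closed linear ODE for $p_t := \PP(Z_t = 0)$ directly from the SDE \eqref{eq:dZt} and integrate it explicitly.

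The deterministic drift $-\mu Z_t\,dt$ preserves both $\{Z_t = 0\}$ and $\{Z_t > 0\}$, so the indicator $\psi(Z_t) := \ind{Z_t = 0}$ is piecewise constant and changes only at atoms of $\cN$ or $\cM$. Each atom of $\cN$ satisfying $u \le \gamma Z_{s^\-}$ resets $Z$ to $0$, producing an increment $\psi(0) - \psi(Z_{s^\-}) = 1$ (the condition forces $Z_{s^\-} > 0$); each atom of $\cM$ satisfying $u \le \gamma \E[Z_{s^\-}]$ shifts $Z$ by $+\rho$, producing an increment $-\ind{Z_{s^\-} = 0}$. Writing these contributions as stochastic integrals against $\cN$ and $\cM$ and taking expectations via the compensation formula (valid here since the integrands are bounded predictable and $s \mapsto \E[Z_s]$ is locally integrable by the uniform integrability in Theorem \ref{the:kappa_alpha}), one obtains
$$p_t \weq p_0 + \gamma \int_0^t \E[Z_s]\,ds - \kappa\gamma \int_0^t p_s\, \E[Z_s]\,ds.$$

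Hence $p_t$ is absolutely continuous and satisfies $p_t' = \gamma \E[Z_t]\,(1 - \kappa p_t)$. The substitution $q_t := p_t - 1/\kappa$ linearises this into $q_t' = -\kappa\gamma \E[Z_t]\, q_t$, whose unique solution $q_t = q_0 \exp\bigl(-\kappa\gamma \int_0^t \E[Z_s]\,ds\bigr)$ is precisely \eqref{eq:cor:kappa_alpha}. For the limit statement, Theorem \ref{the:kappa_alpha} guarantees $\int_0^\infty \E[Z_s]\,ds = \infty$ whenever $\E[Z_0] > 0$ and $\theta \ge 1$: this is explicit in the critical case, and follows from $\inf_t \E[Z_t] > 0$ in the supercritical case. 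The exponential factor then vanishes and $p_t \to 1/\kappa$.

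The only mild technical point is the initial path identity for the non-smooth $\psi$, but I do not expect this to be a real obstacle: the increments at each jump are read off directly from the explicit jump prescriptions in \eqref{eq:dZt}, so no mollification of $\psi$ is needed, and the compensation formula for Poisson random measures applies verbatim to the resulting bounded predictable integrands.
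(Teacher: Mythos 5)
Your proposal is correct and follows essentially the same route as the paper: both derive the ODE $p_t' = \gamma \E[Z_t](1-\kappa p_t)$ by tracking the jumps of $\ind{Z_t=0}$ under resets and excitations (using that $Z_s\ind{Z_s=0}=0$ so the reset compensator is just $\gamma\E[Z_s]$), solve it explicitly, and invoke Theorem \ref{the:kappa_alpha} for $\int_0^\infty \E[Z_s]\,ds=\infty$ when $\theta\ge 1$. No gaps.
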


\subsection{Approximation using the mean field}
\label{sec:Approximation}

The following result confirms that the behaviour of a typical neuron is approximated by the nonlinear mean-field SDE \eqref{eq:dZt}, when the number of neurons $N \to \infty$.
The upper bounds on convergence rate are expressed in terms of another fundamental constant,
\begin{equation}
	\label{eq:ThresholdChaos}   
	\theta_c = \kappa \rho \gamma/\mu,
\end{equation}
which dominates the reproduction number defined by \eqref{eq:ReproductionNumber} according to $\theta \le \theta_c$.
See also Remark~\ref{rmk:comparison_phase_transition_RT}. The subscript $c$ in $\theta_c$ stands for ``chaos''.

We recall that, given probability measures $\nu, \tilde{\nu}$ on $\R$, their \textit{$1$-Wasserstein distance} is defined as
\[
W_1(\nu, \tilde{\nu})
= \inf \E[ |Y-\tilde{Y}|],
\]
where the infimum is taken over all \textit{couplings} of $\nu$ and $\tilde{\nu}$, that is, over all possible random pairs $(Y,\tilde{Y})$ where $Y \sim \nu$ and $\tilde{Y} \sim \tilde{\nu}$. $W_1$ is a complete metric on the space of probability measures with finite first moment.  Notice that \textit{any} coupling immediately provides an upper estimate for $W_1$; we will use this fact often.

\begin{theorem}[Propagation of chaos]
	\label{the:MeanFieldApproximation}
	Assume that $\kappa = 2$ and $\E[(X_0^1)^2]<\infty$. Then there exist a constant $C>0$ such that for all $N$ and all $t$, the expected Wasserstein-1 distance between $\bar{\mathbf{X}}_t = \frac{1}{N} \sum_{i=1}^N \delta_{X^i_t}$ and $f_t = \law(Z_t)$ satisfies
	\[
	\E \left[ W_1(f_t, \bar{\mathbf{X}}_t) \right]
	\wle \frac{C}{N^{1/3}}
	\times
	\begin{cases}
		1,   &\quad \theta_c < 1, \\
		1+t, &\quad \theta_c = 1, \\
		e^{(\theta_c-1) \mu t}, &\quad \theta_c > 1.
	\end{cases}
	\]
\end{theorem}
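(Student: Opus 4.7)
The plan is a two-step coupling argument in the spirit of Cortez~\cite{cortez2016quantitative}. On a common probability space I would construct the finite particle system $\mbf{X}_t$ jointly with an auxiliary process $\mbf{Z}_t = (Z^1_t,\ldots,Z^N_t)$ whose coordinates are each marginally a nonlinear process (so $\law(Z^i_t) = f_t$ for each $i$), driven by the same Poisson random measures $\cN_K$ as $\mbf{X}_t$ together with a minimal amount of additional noise. Concretely, each atom $(T,U,\Xi)$ of $\cN_K$ triggers a reset of $X^{\lceil\Xi\rceil}$ when $U \le \gamma X^{\lceil\Xi\rceil}_{T^-}$ in the particle system, and a reset of the corresponding $Z^i$ under the analogous threshold; the bumps to the two coordinates in $K$ are then coupled through a carefully designed thinning so that each $Z^j$ is bumped at the mean-field rate $\gamma\kappa\,\E[Z_{T^-}]$. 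By maximising the overlap of accept/reject events on shared noise one gets pointwise control of $|X^i_t - Z^i_t|$.

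Setting $\varepsilon_t := \E[|X^1_t - Z^1_t|]$ (independent of the index by exchangeability), a term-by-term analysis of the jump decomposition should yield a Gronwall-type inequality
\[
\varepsilon_t \wle \int_0^t \bigl( C_1\, \varepsilon_s + C_2\, R^{(N)}_s \bigr) ds,
\]
where $C_1$ is comparable to $(\theta_c - 1)\mu$ and $R^{(N)}_s$ collects the error terms from (i) the fluctuation of the empirical bumping rate $\gamma\kappa\,\bar{\mbf{X}}_s$ around its mean $\gamma\kappa\,\E[Z_s]$, and (ii) the $O(1/N)$ bias caused by forbidding self-selection in the finite system. Both contributions are controllable using the uniform integrability granted by Theorem~\ref{the:kappa_alpha} together with the standing $L^2$ hypothesis on $X_0^1$. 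Gronwall then produces the exponential prefactor of rate $(\theta_c - 1)\mu$ in the supercritical regime, a linear $(1+t)$ factor at criticality, and uniform-in-time bounds when $\theta_c < 1$.

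From the coupling bound one immediately gets $W_1(\bar{\mbf{X}}_t, \bar{\mbf{Z}}_t) \le \frac{1}{N}\sum_{i=1}^N |X^i_t - Z^i_t|$, so $\E[W_1(\bar{\mbf{X}}_t, \bar{\mbf{Z}}_t)] \le \varepsilon_t$. It remains to control $\E[W_1(\bar{\mbf{Z}}_t, f_t)]$: if the $Z^i_t$ were i.i.d., this would be $O(N^{-1/2})$ by Fournier--Guillin-type estimates in dimension one; since the coupling forces them to be merely exchangeable, one must replace $\mbf{Z}_t$ by a truly independent system $\tilde{\mbf{Z}}_t$ (with fresh Poisson noise per index) and absorb the extra cost $\E|Z^i_t - \tilde{Z}^i_t|$ into the budget. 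The $N^{-1/3}$ exponent in the final bound then arises from balancing the coupling error against a truncation threshold introduced to handle the nonlinear rate $\gamma Z_{t^-}$ with only an $L^2$ moment available.

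The principal obstacle is the design of the synchronous coupling in the first step, which must simultaneously preserve the marginal law $f_t$ of each $Z^i_t$, retain enough asymptotic independence across $i$ to permit the empirical-measure estimate, and keep each $Z^i_t$ close to the corresponding $X^i_t$. The local firing mechanism is especially delicate because a single firing in the particle system affects three neurons (the firing one and the two excited ones), whereas in the nonlinear limit each neuron receives independent upward impulses; the combinatorial bookkeeping involved in matching these jump laws on common noise is presumably what forces the restriction to $\kappa = 2$.
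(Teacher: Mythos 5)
Your proposal follows essentially the same route as the paper: a synchronous coupling on shared Poisson noise producing exchangeable (not independent) nonlinear processes $Z^i_t$ with law $f_t$ (the paper implements your ``carefully designed thinning'' via an optimal-coupling map $F^i_t$ between $f_t$ and the empirical measure of the $Z^j_t$, $j\neq i$, as in \cite{cortez2016quantitative}), a Gronwall inequality for $\E[|X^1_t-Z^1_t|]$ with rate $(\theta_c-1)\mu$, and a decoupling step that compares $\mathbf{Z}_t$ with a genuinely independent auxiliary system so as to invoke Fournier--Guillin. The one point where your account diverges is the closing attribution of the $N^{-1/3}$ exponent to a truncation of the nonlinear rate under an $L^2$ hypothesis: in the paper it arises instead from optimising a block size $k\sim N^{2/3}$ that balances the decoupling cost $O(k/N)$ against the i.i.d.\ empirical-measure rate $O(k^{-1/2})$ (and the moment input is a separate lemma giving uniform-in-time bounds on the first three moments of $f_t$).
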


Theorem~\ref{the:MeanFieldApproximation} proves
propagation of chaos in terms of the expected $W_1$-distance between the empirical measure of the finite network and its mean-field limit. We mention that, besides convergence as $N\to\infty$, the term ``propagation of chaos'' typically also refers to the asymptotic independence between particles in the system, but this is equivalent to the convergence of its empirical measure, see for instance the classical notes of Sznitman \cite{sznitman1989}. Rate $N^{-1/3}$ in the upper bound appears reasonable,
considering that the convergence of the empirical measure of a sequence of i.i.d.\ variables towards their common law occurs at rate $N^{-1/2}$ when measured in expected $W_1$-distance, see \cite{fournier2015rate} for details. The rate is uniform in time in the case $\theta_c < 1$.

Theorem~\ref{the:MeanFieldApproximation} is proved in Section~\ref{sec:ProofMeanFieldApproximation} using a coupling argument introduced in \cite{cortez2016quantitative}. To this end,
we will write the SDE \eqref{eq:dXti2ndVersion} of a neuron in the finite network, and the mean-field SDE \eqref{eq:dZt}, in a slightly different way, which is easier to handle and displays similarities between the two processes in a natural fashion.

\begin{remark}
	\label{remark:general_kappa}
	For an arbitrary $\kappa \in \N$ fixed, or even a random $\kappa \in \{1,\ldots,N-1\}$ with fixed mean and bounded variance, the main idea of the proof still works. However, the full proof requires significant additional notation and a more careful treatment, rendering the argument unnecessarily technical and difficult to follow, while adding little to no new insights. Thus, we decided to treat only the case $\kappa = 2$.
\end{remark}

\subsection{Persistence}

We can now combine the previous results and state the following theorem that sheds light on the phenomenon of persistence, in terms of the reproduction number $\theta$ defined in \eqref{eq:ReproductionNumber} and the threshold parameter $\theta_c$ in \eqref{eq:ThresholdChaos}.

\begin{theorem}[Persistence]
	\label{the:persist}
	Assume that $\kappa=2$ and $\E[(X_0^1)^2]<\infty$.
	If $\E[X_0^1]>0$, then the finite network satisfies the following dichotomy:
	\begin{itemize}
		\item If $\theta>1$, then there exist constants $c >0$, $\tilde{c} \in \R$, and $\epsilon>0$ (not depending on $N$) such that for all $N \ge 1$,
		\[
		\E [X_t^1]
		\wge \epsilon
		\quad
		\text{for all} \ t \le \tilde{c} + c \log N.
		\]
		\item If $\theta<1$, then for the metric $\omega$ defined by \eqref{eq:Metric}, we have
		\[
		\E [\omega(X_t^1, 0)]
		\wle \E [\omega(X_0^1, 0)] \, e^{-(1-\theta)\mu t}.
		\]
		If we additionally assume that $\theta_c < 1$, then
		\[
		\E [ X_t^1 ]
		\wle  \E [ X_0^1 ]
		e^{-(1-\theta_c)\mu t}.
		\]
	\end{itemize}
\end{theorem}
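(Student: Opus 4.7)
The plan is to split along the sign of $\theta-1$ and in each case leverage the earlier results, since the substantive analytic work has already been done in Theorems~\ref{the:wellpsdFinite}, \ref{the:kappa_alpha}, and \ref{the:MeanFieldApproximation}.

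In the case $\theta<1$, the first bound is simply Theorem~\ref{the:wellpsdFinite} specialised to $i=1$. For the second bound (under the extra assumption $\theta_c<1$), I would compute $\frac{d}{dt}\E[X_t^1]$ directly from the SDE~\eqref{eq:dXti2ndVersion}. Compensating the Poisson random measures, the drift contributes $-\mu\E[X_t^1]$; the self-firing term contributes $-\gamma\E[(X_t^1)^2]$ (the rate is $\gamma X_t^1$ and the jump magnitude is $-X_t^1$); and the excitation term contributes $+\rho\gamma\kappa\E[X_t^1]$, using that a firing at any other neuron $j$ excites neuron $1$ with probability $\kappa/(N-1)$ together with exchangeability $\E[X_t^j]=\E[X_t^1]$. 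Since $\rho\gamma\kappa=\theta_c\mu$, this gives
\[
\tfrac{d}{dt}\E[X_t^1] \ = \ (\theta_c-1)\mu\,\E[X_t^1]\ -\ \gamma\E[(X_t^1)^2] \ \le\ (\theta_c-1)\mu\,\E[X_t^1],
\]
and Grönwall yields $\E[X_t^1]\le\E[X_0^1] e^{-(1-\theta_c)\mu t}$. The hypothesis $\E[(X_0^1)^2]<\infty$ is used to guarantee that the second-moment term is finite, so that the integrated form of the identity above is meaningful.

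In the case $\theta>1$, I would combine Theorem~\ref{the:kappa_alpha} with Theorem~\ref{the:MeanFieldApproximation}. Since $X_0^1$ and $Z_0$ share the same law, $\E[Z_0]>0$ and $\E[Z_0^2]<\infty$, so Theorem~\ref{the:kappa_alpha} gives $\epsilon_0:=\inf_{t\ge 0}\E[Z_t]>0$. The elementary inequality $1-e^{-x}\le x$ yields $\theta\le\theta_c$, so $\theta>1$ forces $\theta_c>1$ and the third regime of Theorem~\ref{the:MeanFieldApproximation} applies. By exchangeability, $\E[X_t^1]=\E\bigl[\int x\,d\bar{\mathbf{X}}_t(x)\bigr]$; combining this with Kantorovich duality applied to the $1$-Lipschitz identity function,
\[
\bigl|\E[X_t^1]-\E[Z_t]\bigr| \ \le\ \E\bigl[W_1(\bar{\mathbf{X}}_t,f_t)\bigr] \ \le\ \frac{C}{N^{1/3}}\,e^{(\theta_c-1)\mu t}.
\]
Therefore $\E[X_t^1]\ge\epsilon_0-(C/N^{1/3})e^{(\theta_c-1)\mu t}$, which is at least $\epsilon_0/2$ as soon as $e^{(\theta_c-1)\mu t}\le \epsilon_0 N^{1/3}/(2C)$, i.e.\ $t\le\tilde c+c\log N$ for $c=1/(3(\theta_c-1)\mu)$ and $\tilde c=\log(\epsilon_0/(2C))/((\theta_c-1)\mu)$. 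Taking $\epsilon=\epsilon_0/2$ finishes this case.

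The main obstacle here is genuinely conceptual rather than technical: the theorem is essentially a corollary of the combined strength of the phase-transition result and the quantitative propagation-of-chaos bound, so the key insight is matching the exponential blow-up $e^{(\theta_c-1)\mu t}$ in the chaos estimate against the prefactor $N^{-1/3}$, which is precisely what produces the logarithmic persistence time. A minor but necessary check is that the identity-function test against $W_1$ is legitimate on $\R_+$ (immediate by Kantorovich duality) and that the integrated moment ODE used for the subcritical decay bound is rigorously justified by the second-moment hypothesis.
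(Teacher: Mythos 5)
Your proposal is correct and follows essentially the same route as the paper's proof: the subcritical bounds come from Theorem~\ref{the:wellpsdFinite} and a direct moment computation on the SDE with the second-moment term discarded, and the supercritical persistence comes from pitting $\inf_t \E[Z_t]>0$ (Theorem~\ref{the:kappa_alpha}) against the $CN^{-1/3}e^{(\theta_c-1)\mu t}$ chaos bound of Theorem~\ref{the:MeanFieldApproximation}. The only cosmetic difference is that you invoke Kantorovich duality for the identity function where the paper uses the triangle inequality through $\delta_0$ (writing $\E[Z_t]=W_1(f_t,\delta_0)$), which is the same estimate.
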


Theorem \ref{the:persist}
states that in the case $\theta>1$, the activity of the finite network persists for a time of order (at least) $\log N$; but if $\theta<1$, then the activity dies out at a rate independent of $N$. 

\begin{remark}
	\label{remark:logN}
	We believe that $\log N$ is a crude lower estimate on the time it takes for the activity to decay. Numerical simulations suggest that when $\theta>1$, the activity of the network persists much longer. Moreover, for $N$ mildly large and $\theta>1$ not too close to 1, the empirical measure of the network appears to stabilize around some nontrivial distribution; this suggests that the decay time grows very fast with $N$, possibly exponentially. This behaviour can also be observed in \cite[Section 7.2.1]{robert2016dynamics}, although the condition at which the phase transition occurs is different, see Remark \ref{rmk:comparison_phase_transition_RT}.
\end{remark}

\section{Proofs} 
\label{sec:proofs}

In this section we provide detailed proofs for all of our results.
By measuring neuron potentials in units of firing amplitude, we may set $\rho=1$, and by measuring time in units of a neuron half life divided by $\log(2)$, we may set $\mu=1$.
To simplify notation, we will hence often set $\rho=\mu=1$ without
loss of generality.

We first obtain a simple formula for the expectation of a function of the process $(\mbf{X}_t)_{t\geq 0}$, which will be used several times.
In general fix $\phi: \R^N \to \R$, and let $F_t = \phi(\mbf{X}_t)$.
In the absence of firings, $F_t$ evolves according to $\frac{d}{dt} F_t =
\sum_i \partial_i \phi(\mbf{X}_t) \frac{d}{dt} X_t^i = - \mu \sum_i X_t^i \partial_i \phi(\mbf{X}_t)$. If neuron $i$ fires and excites a set neurons $K$ at time instant $t$, then $F_t$ changes according to $F_t - F_{t^\-} = \phi(\mbf{X}_{t^\-} + \rho \sum_{j \in K} e_j - X_{t^\-}^i e_i) - \phi(\mbf{X}_{t^\-})$. It follows that
\begin{align*}
	d F_t
	&\weq - \mu \sum_i X_t^i \partial_i \phi(\mbf{X}_t) dt \\
	&\quad + \sum_i \sum_{K \not\ni i} \int_{\R_+} \ind{u \le \gamma X^i_{t^\-}}
	\Big( \phi(\mbf{X}_{t^\-} + \rho e_K - X_{t^\-}^i e_i) - \phi(\mbf{X}_{t^\-}) \Big) \cN_{i,K}(dt,du).
\end{align*}
Taking expectations, we find that
\begin{equation}
	\label{eq:DriftGeneral}
	\begin{aligned}
		\frac{d}{dt} \E [ F_t ]
		&\weq - \mu \sum_i \E \left[X_t^i \partial_i \phi(\mbf{X}_t)\right] \\
		&\quad + \binom{N-1}{\kappa}^{-1} \ \sum_i \sum_{K \not\ni i} \gamma \E \left[X^i_t
		\Big( \phi(\mbf{X}_{t} + \rho e_K - X_{t}^i e_i) - \phi(\mbf{X}_{t}) \Big)\right]
	\end{aligned}
\end{equation}

\subsection{Long-run behaviour of finite-neuron system (Theorem~\ref{the:wellpsdFinite})}

\begin{proof}[Proof of Theorem~\ref{the:wellpsdFinite}]
	Strong existence of solutions to \eqref{eq:dXti2ndVersion} is obtained by truncating the firing rate and letting the truncation parameter approach $\infty$, as in the proof of Lemma~\ref{lemma:lemmaZ}. Uniqueness is standard.
	
	We will next verify that $\norm{\mbf{X}_t} = \sum_{i=1}^N X^i_t \to 0$ almost surely.
	We claim that $H_t = e^{- (\gamma/\mu) \norm{\mbf{X}_t}}$ is a submartingale with respect to the filtration $(\cF_t)_{t \ge 0}$ generated by the marked point processes $(\cN^i)_{i=1}^N$, where $\cN^i:=\sum_{K: K \not\ni i} \cN_{K}(dt,du,(i-1,i])$. In the absence of firings, each neuron evolves according to $\frac{d}{dt} X^i_t = -\mu X^i_t$, so that $\frac{d}{dt} \norm{\mbf{X}_t} = -\mu \norm{\mbf{X}_t}$ and $\frac{d}{dt} H_t = - (\gamma/\mu) H_t \frac{d}{dt} \norm{\mbf{X}_t} = \gamma \norm{\mbf{X}_t} H_t$.
	If neuron $i$ fires at time instant $t$, we see that $\norm{\mbf{X}_t} = \norm{\mbf{X}_{t^\-}} + \rho\kappa - X^i_{t^\-}$, and it follows that
	\[
	H_t - H_{t^\-}
	\weq \left( e^{-(\gamma/\mu)(\rho\kappa - X^i_{t^\-})} - 1 \right) H_{t^\-}.
	\]
	It follows that for any $0 \le s \le t$,
	\[
	\begin{aligned}
		H_t - H_s
		&\weq \int_s^t \gamma \norm{\mbf{X}_r} H_r \, dr \\
		&\qquad + \sum_i \int_s^t \int_{\R_+}
		\ind{u \le \gamma X_{r^\-}^i}
		\left( e^{-(\gamma/\mu)(\rho\kappa - X^i_{r^\-})} - 1 \right) H_{r^\-} \cN^i(dr,du),
	\end{aligned}
	\]
	from which we conclude that
	\begin{equation}
		\label{eq:proof_prop_wellpsd_ps_Ht}
		H_t - H_s
		\wge \int_s^t \gamma \norm{\mbf{X}_r} H_r \, dr \\
		- \sum_i \int_s^t \int_{\R_+}
		\ind{u \le \gamma X_{r^\-}^i} H_{r^\-} \cN^i(dr,du)
	\end{equation}
	almost surely. By noting that $\sum_i \int_{\R_+} \ind{u \le \gamma X^i_r} du = \gamma \norm{\mbf{X}_r}$, and recalling that the intensity measure of $\cN^i(dr,du)$ equals $dr \, du$, we find that
	\[
	\E\left[ \sum_i \int_s^t \int_{\R_+}
	\ind{u \le \gamma X_{r^\-}^i} H_{r^\-} \cN^i(dr,du) \, \middle| \, \cF_s \right]
	\weq \E\left[ \int_s^t \gamma \norm{\mbf{X}_r} H_r \, dr \, \middle| \, \cF_s \right].
	\]
	The conditional expectation of the right side of \eqref{eq:proof_prop_wellpsd_ps_Ht} given $\cF_s$ hence equals zero, and it follows that $\E [H_t - H_s \, | \, \cF_s] \ge 0$ almost surely. Hence $H_t$ is a bounded submartingale, and by Doob's martingale convergence theorem, $H_t \to H_\infty$ almost surely, where $H_\infty \in [0,1]$ is a random variable. Hence $\norm{\mbf{X}_t} \to - \frac{\mu}{\gamma} \log H_\infty \in [0,\infty]$ almost surely.
	
	Similarly, we find that
	\[
	\begin{aligned}
		\norm{\mbf{X}_t} - \norm{\mbf{X}_s}
		&\weq - \int_s^t \mu \norm{\mbf{X}_r} \, dr \\
		&\qquad + \sum_i \int_s^t \int_{\R_+}
		\ind{u \le \gamma X_{r^\-}^i}
		(\rho\kappa - X^i_{r^\-}) \cN^i(dr,du).
	\end{aligned}
	\]
	By taking expectations, we see that
	\begin{align*}
		\E [\norm{\mbf{X}_t}] - \E [\norm{\mbf{X}_s}]
		&\weq - \int_s^t \mu \E [\norm{\mbf{X}_r}] \, dr
		+ \E \left[\sum_i \int_s^t \gamma X_{r}^i 
		(\rho\kappa - X^i_{r}) dr\right],
	\end{align*}
	so that
	\[
	\frac{d}{dt} \E [\norm{\mbf{X}_t}]
	\weq (\gamma\rho\kappa - \mu) \E [\norm{\mbf{X}_t}]
	- \gamma \E \left[\sum_i (X_{t}^i)^2\right].
	\]
	By Jensen's inequality,
	$\E \left[\sum_i (X_t^i)^2\right] \ge N^{-1} \E \left[\norm{\mbf{X}_t}^2\right] \ge N^{-1}(\E [\norm{\mbf{X}_t}])^2$, and it follows that $y_t = \E [\norm{\mbf{X}_t}]$ satisfies
	\[
	\frac{d}{dt} y_t
	\wle (\gamma\rho\kappa - \mu) y_t
	- \gamma N^{-1} y_t^2.
	\]
	The above inequality shows that $\frac{d}{dt} y_t \le 0$ whenever $y_t \ge N (\rho\kappa - \mu/\gamma)$. Therefore, $\sup_{t \ge 0} y_t < \infty$.  Fatou's lemma implies that $\E [\lim_{t \to \infty} \norm{\mbf{X}_t}] \le \liminf_{t \to \infty} \E [\norm{\mbf{X}_t}] < \infty$. Especially, $\lim_{t \to \infty} \norm{\mbf{X}_t} < \infty$ almost surely. Furthermore, because $0 \in \R_ +^N$ is the only fixed point of the deterministic evolution $\frac{d}{dt} \norm{\mbf{X}_t} = - \mu \norm{\mbf{X}_t}$, and because the number of firings on every bounded time interval is finite almost surely, we conclude that $\lim_{t \to \infty} \norm{\mbf{X}_t} = 0$ almost surely.

	Assuming $\theta < 1$, we will now prove \eqref{eq:wellpsdFinite}.
	By applying \eqref{eq:DriftGeneral} to $\phi(\mbf{x}) = e^{-(\gamma/\mu)x_i}$, noting that $\binom{N-1}{\kappa}^{-1} \binom{N-2}{\kappa-1} = \frac{\kappa}{N-1}$, we find that
	\begin{equation*}
		\begin{aligned}
			\frac{d}{dt} \E \left[e^{-(\gamma/\mu)X^i_t}\right]
			&\weq \gamma \E \left[X_t^i e^{-(\gamma/\mu)X^i_t}\right] \\
			&\quad + \gamma \E \left[X^i_t
			\Big( 1 - e^{-(\gamma/\mu)X^i_t} \Big)\right] \\
			&\quad + \binom{N-1}{\kappa}^{-1} \sum_{j \ne i} \sum_{K \ni i, K \not\ni j} \gamma \E \left[ X^j_t \Big( e^{-(\gamma/\mu)(X^i_t + \rho)} - e^{-(\gamma/\mu)X^i_t} \Big) \right] \\
			&\weq \gamma \E\left[X^i_t\right]
			- \gamma \Big( 1 -  e^{- \rho \gamma/\mu} \Big) \frac{\kappa}{N-1}
			\sum_{j \ne i} \E \left[ X^j_t e^{-(\gamma/\mu)X^i_t} \right] \\
			&\wge  \gamma \E \left[X^i_t\right]
			- \gamma \Big( 1 -  e^{- \rho \gamma/\mu} \Big) \frac{\kappa}{N-1}
			\sum_{j \ne i} \E \left[X^j_t\right] \\
			&\weq \gamma ( 1 - \theta ) \E \left[X^i_t\right],
		\end{aligned}
	\end{equation*}
	thanks to exchangeability.
	In light of $(\gamma/\mu)X^i_t \ge 1 - e^{-(\gamma/\mu)X^i_t}$, it follows that
	\[
	\frac{d}{dt} \E \left[e^{-(\gamma/\mu)X^i_t}\right]
	\wge \mu ( 1 - \theta )
	\Big( 1 - \E \left[e^{-(\gamma/\mu)X^i_t}\right] \Big).
	\]
	Hence, $g_t = 1-\E [ e^{-(\gamma/\mu)X^i_t}] $ satisfies $\frac{d}{dt} g_t \le - \mu(1-\theta) g_t$. The desired inequality \eqref{eq:wellpsdFinite} now follows from Grönwall's Lemma.
\end{proof}

\subsection{Well-posedness of the mean field (Theorem~\ref{the:well_posedness_Z})}

Theorem~\ref{the:well_posedness_Z}
is proved by following the strategy of \cite[Theorem 2]{robert2016dynamics}.
For simplicity, in this section we assume $\rho=\mu=1$.
We first linearise the SDE by fixing a rate function for the firings: given a continuous function $r: \R_+ \to \R_+$, consider the following SDE, with initial condition $Y_0^r = Z_0$:
\begin{equation}
	\label{eq:dYtr}
	dY_t^r
	= - Y_{t^\-}^r dt
	- Y_{t^\-}^r \int_0^\infty \ind{u\le \gamma Y_{t^\-}^r} \cN(dt,du)
	+ \int_0^\infty \ind{u \le \gamma r_t} \cM(dt,du),
\end{equation}
where $\cN$ and $\cM$ are as in \eqref{eq:dZt}.
Denote $\Vert r \Vert_T := \sup_{t\in[0,T]} |r_t|$.

\begin{lemma}
	\label{lemma:lemmaZ}
	There exists a unique strong solution of \eqref{eq:dYtr}.
\end{lemma}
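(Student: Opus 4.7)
The plan is to construct $Y^r$ by truncating the unbounded $\cN$-driven firing rate and showing that the truncation becomes vacuous in the limit. This is the standard strategy for SDEs with unbounded but pathwise-controllable jump rates. First, I fix $T>0$ and for each $n \in \N$ define the truncated equation
\begin{equation*}
    dY_t^{r,n}
    \weq -Y_{t^\-}^{r,n}\,dt - Y_{t^\-}^{r,n}\int_0^\infty \ind{u \le \gamma (Y_{t^\-}^{r,n} \wedge n)}\,\cN(dt,du)
    + \int_0^\infty \ind{u \le \gamma r_t}\,\cM(dt,du),
\end{equation*}
with $Y_0^{r,n} = Z_0$. On $[0,T]$, the $\cN$-driven jump rate is bounded by $\gamma n$ and the $\cM$-driven jump rate by $\kappa\gamma\|r\|_T$, both finite. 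Standard results for jump SDEs driven by Poisson random measures with bounded jump rates (see e.g.\ Ikeda--Watanabe) then yield a unique strong solution $Y^{r,n}$ on $[0,T]$; since $T$ is arbitrary, $Y^{r,n}$ is defined on $[0,\infty)$.

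Next I verify consistency across truncation levels and then remove the truncation. Let $\tau_n := \inf\{t \ge 0 : Y_t^{r,n} \ge n\}$. On $[0,\tau_n)$ the truncation is inactive, so $Y^{r,n}$ and $Y^{r,n+1}$ satisfy the same bounded-rate SDE driven by the same noises; pathwise uniqueness forces them to agree there, and I set $Y_t^r := Y_t^{r,n}$ for any $n$ with $t < \tau_n$. The main obstacle is showing that $\tau_n \to \infty$ almost surely. The key structural observation is that $Y^{r,n}$ can only \emph{increase} via atoms of $\cM$ with $u \le \gamma r_t$, and each such atom raises $Y^{r,n}$ by exactly $1$; the drift $-Y_{t^\-}^{r,n}dt$ only shrinks it, and the $\cN$-term only resets it to zero. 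Consequently,
\begin{equation*}
    Y_t^{r,n} \wle Z_0 + N^r(t),
    \qquad
    N^r(t) := \int_0^t \int_0^\infty \ind{u \le \gamma r_s}\,\cM(ds,du).
\end{equation*}
Because $r$ is continuous, $N^r(t)$ is a Poisson count with finite mean $\kappa\gamma\int_0^t r_s\,ds$, hence almost surely finite for every $t$. Thus $\tau_n \ge \inf\{t : Z_0 + N^r(t) \ge n\} \to \infty$ almost surely, so $Y_t^r$ is well-defined on $[0,\infty)$. On each interval $[0,\tau_n)$ the truncation $Y^{r,n}\wedge n$ coincides with $Y^{r,n}$, so $Y^r$ solves \eqref{eq:dYtr} there; letting $n \to \infty$ gives a solution on all of $[0,\infty)$.

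For uniqueness, given any other strong solution $\tilde Y^r$, define $\tilde\tau_n := \inf\{t : \tilde Y_t^r \ge n\}$; on $[0,\tilde\tau_n)$ the process $\tilde Y^r$ satisfies the truncated equation with the same noise inputs, so pathwise uniqueness at the bounded level yields $\tilde Y^r = Y^{r,n}$ on $[0,\tau_n \wedge \tilde\tau_n)$, whence $\tilde\tau_n = \tau_n$ and $\tilde Y^r = Y^r$ on $[0,\tau_n)$. Sending $n\to\infty$ gives $\tilde Y^r = Y^r$ globally, completing the proof.
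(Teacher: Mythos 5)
Your proposal is correct and follows essentially the same route as the paper: truncate the state-dependent firing rate, solve the resulting bounded-rate SDE, and remove the truncation by showing the process stays below the truncation level. Your pathwise domination $Y_t^{r,n} \le Z_0 + N^r(t)$ by the Poisson count of $\cM$-atoms is exactly the comparison ``without resets and exponential decay'' that the paper invokes to control $\PP(B_M^c)$, just carried out almost surely via stopping times rather than as a probability bound.
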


\begin{proof}[Proof of Lemma \ref{lemma:lemmaZ}]
	Uniqueness is classical. To prove existence, fix a time horizon $T>0$, a continuous function $r:[0,T]\to\R_+$, a threshold $M>0$, and consider the truncated SDE
	\begin{equation}
		\label{eq:dYtrM}
		\begin{split}
			dY_t^{r,M}
			&= - Y_{t^\-}^{r,M} dt
			- Y_{t^\-}^{r,M} \int_0^\infty \ind{u\le \min(\gamma Y_{t^\-}^{r,M},M)} \cN(dt,du) \\
			& \qquad\qquad {} + \int_0^\infty \ind{u \le \gamma r_t} \cM(dt,du),
		\end{split}
	\end{equation}
	with $Y_0^{r,M} = Z_0$. Since the rates are bounded, there is strong existence and uniqueness for \eqref{eq:dYtrM}. Consider the event $B_M = \{ \gamma Y_t^{r,M} \le M : \forall t \in[0,T]\}$, which increases with $M$, and notice that for $\tilde{M} \geq M$ we have $Y^{r,M} \equiv Y^{r,\tilde{M}}$ on $B_M$. Thus, we can define $Y_t^r = \lim_{M\to\infty} Y_t^{r,M}$ on the event $B_\infty = \bigcup_{M>0} B_M$. It is easy to see that $(Y_t^r)$ solves \eqref{eq:dYtr} on $B_\infty$. It remains to show that $\PP(B_\infty) = 1$, for which it is enough to prove that $\lim_{M\to\infty} \PP(B_M^c) = 0$. Indeed, since $\Vert r \Vert_T < \infty$, it is not hard to obtain a bound like $\PP(B_M^c) \le C_T/M$ for some constant $C_T>0$ (for instance, study the solution of \eqref{eq:dYtr} without resets and exponential decay and compare with $Y_t^{r,M}$). This proves existence and uniqueness for \eqref{eq:dYtr} on $[0,T]$ for all $T>0$, which can be easily extended to $[0,\infty)$.
\end{proof}

Now consider a functional $\cA : C(\R_+,\R_+) \to C(\R_+,\R_+)$ defined by
\begin{equation}
	\label{eq:cA}
	(\cA r)_t = \E \left[Y_t^r\right]
	\quad \text{where $(Y_t^r)$ solves \eqref{eq:dYtr}}.
\end{equation}

\begin{lemma}
	\label{lemma:LemmaFixedP}
	The functional $\cA$ has a unique fixed point.
\end{lemma}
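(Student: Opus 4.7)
The plan is to apply the Banach fixed point theorem on the space $C([0,T],\R_+)$ of continuous non-negative rate functions on a finite horizon $[0,T]$, equipped with the supremum norm, and then to patch together the unique fixed points on successive horizons.

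First I would verify that $\cA$ maps $C([0,T],\R_+)$ into itself. Taking expectation in \eqref{eq:dYtr} using the compensators of $\cN$ and $\cM$ yields the integral equation
$(\cA r)_t = \E[Z_0] - \int_0^t (\cA r)_s\,ds - \gamma \int_0^t \E[(Y_s^r)^2]\,ds + \gamma \kappa \int_0^t r_s\,ds$,
which is continuous in $t$ once $\E[(Y^r)^2]$ is locally integrable. The latter follows from the observation that upward jumps of $Y^r$ occur at the bounded rate $\gamma\kappa\|r\|_T$ independently of $Y^r$ itself, so that $Y_t^r$ is pathwise dominated by $Z_0 + P_t$ for a Poisson process $P_t$ with intensity $\gamma\kappa\|r\|_T$, giving uniform moment bounds on $[0,T]$ whenever $Z_0$ has a sufficient finite moment.

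To establish contraction, I would couple $Y^r$ and $Y^{\tilde r}$ by driving both with the same Poisson measures $\cN$ and $\cM$. The $\cM$-excitation mismatches contribute at most $\gamma\kappa \int_0^t |r_s - \tilde r_s|\,ds$ to the expected difference. The delicate part is the reset jumps driven by $\cN$: when $\gamma\min(Y^r_{t^-},Y^{\tilde r}_{t^-}) < u \le \gamma\max(Y^r_{t^-},Y^{\tilde r}_{t^-})$, only the larger process resets, which may actually \emph{increase} the pathwise $|Y^r - Y^{\tilde r}|$. The natural way around this, which I would adopt, is to take expectation \emph{without absolute values} inside the integrand: the integrated firing contribution to $\E[Y^r_t - Y^{\tilde r}_t]$ equals $-\gamma\int_0^t (v^r_s - v^{\tilde r}_s)\,ds$, where $v_s^r = \E[(Y^r_s)^2]$. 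The difference of second moments can then be written as $v_s^r - v_s^{\tilde r} = \E[(Y^r_s - Y^{\tilde r}_s)(Y^r_s + Y^{\tilde r}_s)]$ and bounded by Cauchy--Schwarz in terms of the $L^2$-distance of the coupled processes times the uniform moment bound.

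Combining these estimates and applying Gr\"onwall to a coupled system for first- and second-moment differences, I would obtain an inequality of the form $|(\cA r)_t - (\cA\tilde r)_t| \le C_T \int_0^t |r_s - \tilde r_s|\,ds$ on $[0,T]$, with $C_T$ depending on $T$ and the moments of $Z_0$. Iterating gives $\sup_{t \in [0,T]} |(\cA^n r)_t - (\cA^n \tilde r)_t| \le (C_T T)^n / n!\,\|r - \tilde r\|_{\infty,T}$, so that some power $\cA^n$ is a strict contraction in the sup norm on $[0,T]$; Banach's theorem then yields a unique fixed point on $[0,T]$, and uniqueness forces consistency across horizons, producing a global fixed point on $\R_+$. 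The main obstacle is closing the coupled moment estimates: the reset mechanism forces the first-moment difference to depend on the second-moment difference, which in turn depends, via its own It\^o expansion, on the third, so one must either close a finite-moment system by Cauchy--Schwarz against the pathwise moment bound derived above, or directly control the $L^2$-distance of the coupled processes, and it is here that the principal technical difficulty lies.
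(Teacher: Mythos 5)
Your overall skeleton --- couple $Y^r$ and $Y^{\tilde r}$ through the same Poisson measures $\cN$ and $\cM$, derive a bound $|(\cA r)_t-(\cA\tilde r)_t|\le C\int_0^t|r_s-\tilde r_s|\,ds$, iterate to obtain the factor $(CT)^n/n!$, apply Banach to a power of $\cA$, and patch horizons --- is exactly the paper's. But the key contraction estimate is not actually established in your write-up, and the reason is a wrong turn at the reset term. You argue that one cannot work with $h_t=\E[|Y^r_t-Y^{\tilde r}_t|]$ because a reset of only the larger process may increase the pathwise difference, and you therefore switch to tracking $\E[Y^r_t-Y^{\tilde r}_t]$ together with the difference of second moments; this forces you into the hierarchy (first moment difference depends on second, which depends on third, \dots) that you yourself acknowledge you do not close. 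That unclosed hierarchy is a genuine gap: as written, the proposal does not produce the inequality it needs.

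The point you are missing is that although an individual one-sided reset can increase $|Y^r-Y^{\tilde r}|$ pathwise, the \emph{aggregate} instantaneous contribution of the reset mechanism to $\frac{d}{dt}\,\E[|Y^r_t-Y^{\tilde r}_t|]$ is exactly $-\gamma\,\E[(Y^r_t-Y^{\tilde r}_t)^2]\le 0$. Indeed, write $a=Y^r_{t^\-}\ge b=Y^{\tilde r}_{t^\-}$ (the other case is symmetric): marks $u\le\gamma b$ reset both processes and change the difference by $-(a-b)$, while marks $u\in(\gamma b,\gamma a]$ reset only $Y^r$ and change the difference by $b-(a-b)=2b-a$; weighting each change by its rate gives
\[
\gamma b\cdot\bigl(-(a-b)\bigr)+\gamma(a-b)\cdot(2b-a)
\weq -\gamma(a-b)^2 .
\]
Discarding this nonpositive term and bounding the $\cM$-mismatch by $\gamma\kappa|r_t-\tilde r_t|$ (each mismatched mark moves exactly one of the two processes up by $\rho=1$, and such marks arrive at rate $\gamma\kappa|r_t-\tilde r_t|$) yields $\partial_t h_t\le -h_t+\gamma\kappa|r_t-\tilde r_t|$ with no second or third moments at all; since $|(\cA r)_t-(\cA\tilde r)_t|=|\E[Y^r_t-Y^{\tilde r}_t]|\le h_t$, the contraction follows immediately. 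This is precisely how the paper closes the estimate. (Your domination of $Y^r_t$ by $Z_0+P_t$ with $P$ a Poisson process of intensity $\gamma\kappa\|r\|_T$, used to show that $\cA$ maps $C([0,T],\R_+)$ into itself, is fine.)
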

\begin{proof}[Proof of Lemma \ref{lemma:LemmaFixedP}]
	Fix a time horizon $T>0$, and let $r,q\in C([0,T],\R_+)$. Let $(Y_t^r)$, $(Y_t^q)$ be the strong solutions to \eqref{eq:dYtr} with rate functions $r$ and $q$, respectively, with $Y_0^r = Y_0^q = Z_0$. Call $h_t = \E\left[|Y_t^r-Y_t^q|\right]$, then:
	\[
	\partial_t h_t
	= - h_t - \gamma \E \left[|Y_t^r-Y_t^q|^2\right] + \gamma \kappa |r_t - q_t|
	\le - h_t + \gamma \kappa |r_t - q_t|.
	\]
	Consequently,
	\[
	|(\cA r)_t - (\cA q)_t |
	\le h_t
	\le \gamma \kappa \int_0^t |r_s - q_s| e^{-(t-s)} ds
	\le \gamma \kappa \int_0^t |r_s - q_s| ds.
	\]
	Iterating this inequality gives
	\[
	\Vert \cA^n r - \cA^n q \Vert_T
	\le \frac{(\gamma \kappa T)^n}{n!} \Vert r - q \Vert_T
	\]
	for all $n\in\N$. Thus, for $n$ large enough, $\cA^n$ is a contraction on $(C([0,T],\R_+), \Vert \cdot \Vert_T)$, then $\cA$ has a unique fixed point in $C([0,T],\R_+)$. The extension to $C(\R_+,\R_+)$ is straightforward.
\end{proof}

\begin{proof}[Proof of Theorem \ref{the:well_posedness_Z}]
	Define $Z_t = Y_t^r$, where $(Y_t^r)$ is the unique strong solution to \eqref{eq:dYtr} with rate function $r\in C(\R_+,\R_+)$ chosen as the the unique fixed point of the functional $\cA$ given by \eqref{eq:cA}. By definition of $\cA$, we have $\E[Z_t] = r_t$, thus $(Z_t)$ solves \eqref{eq:dZt}. 
	
	Uniqueness is straightforward: let $Z$ and $\tilde{Z}$ be strong solutions to \eqref{eq:dZt} with $Z_0=\tilde{Z}_0$. If we define $r_t:=\E[Z_t]$ and $\tilde{r}_t:=\E[\tilde{Z}_t]$, then $Z=Y^r$ and $\tilde{Z}=Y^{\tilde{r}}$ are strong solutions to \eqref{eq:dYtr} and is immediate to prove that both $r$ and $\tilde{r}$ are fixed points for $\cA$, so $r=\tilde{r}$ by Lemma \ref{lemma:LemmaFixedP}, and hence $Z=Y^r=Y^{\tilde{r}}=\tilde{Z}$ by Lemma \ref{lemma:lemmaZ}.
\end{proof}

\subsection{Phase transition in the mean field (Theorem~\ref{the:kappa_alpha})}

\begin{proof}[Proof of Theorem~\ref{the:kappa_alpha}]
	Lemma \ref{the:UniformMomentBounds} below implies that $\sup_t \E [Z_t^2] < \infty$ when we assume that $\E [Z_0^2] < \infty$, thus $(Z_t)_{t\geq 0}$ is uniformly integrable.
	
	We now study phase transition.
	Let us analyse the evolution of $h_t = \E \left[e^{-(\gamma/\mu) Z_t}\right]$. Observe that $h_t\in[0,1]$. In absence of firings, $e^{-(\gamma/\mu) Z_t}$ evolves according to $\frac{d}{dt} e^{-(\gamma/\mu) Z_t} = e^{-(\gamma/\mu) Z_t} (-(\gamma/\mu) \frac{d}{dt} Z_t) = \gamma Z_t e^{-(\gamma/\mu) Z_t}$. In a reset, $e^{-(\gamma/\mu) Z_t}$ increases by $1 - e^{-(\gamma/\mu) Z_{t^\-}}$, and in an event of excitation, it increases by $(e^{-\rho \gamma/\mu} - 1) e^{-(\gamma/\mu) Z_{t^\-}}$. Therefore,
	\begin{align*}
		d (e^{-(\gamma/\mu) Z_t})
		&= \gamma Z_t e^{-(\gamma/\mu) Z_t} dt \\
		& \quad {} + ( 1 - e^{-(\gamma/\mu) Z_{t^\-}} ) \int_{\R_+} \ind{u \le \gamma Z_{t^\-}} \cN(dt,du) \\
		& \quad {} + (e^{-\rho\gamma/\mu} - 1) e^{-(\gamma/\mu) Z_{t^\-}} 
		\int_{\R_+} \ind{u \le \gamma \E\left[Z_{t^\-}\right]} \cM(dt,du),
	\end{align*}
	where $\cN(dt,du)$ and $\cM(dt,du)$ are as in \eqref{eq:dZt}. By taking expectations, we find that the terms $\E[\gamma Z_t e^{-(\gamma/\mu) Z_t}]$ cancel, thus obtaining
	\begin{equation}
		\label{eq:dhtlambda}
		\frac{d}{dt} h_t
		\weq \gamma r_t  - \gamma\theta r_t h_t,
	\end{equation}
	where $r_t = \E [Z_t]$, and we recall that $\theta = \kappa (1-e^{\rho\gamma/\mu})$. This differential equation is solved by
	\[
	h_t
	= e^{-\gamma\theta R_t} \,  h_0
	+ \left(1 - e^{-\gamma\theta R_t}  \right) \frac{1}{\theta},
	\]
	where $R_t = \int_0^t r_s ds$. Because $R_t$ is nondecreasing, it follows that $h_t$ converges to
	\begin{equation}
		\label{eq:hinfty}
		h_\infty
		= e^{-\gamma\theta R_\infty}  h_0
		+ \left(1 - e^{-\gamma\theta R_\infty}  \right) \frac{1}{\theta},
	\end{equation}
	where $R_\infty = \int_0^\infty r_s ds \in [0,\infty]$. Recall that $h_t\in[0,1]$ for all $t \geq 0$, so $h_\infty\in [0,1]$ as well. We now prove that $h_\infty = \min(1,\frac{1}{\theta})$. Let us first state some remarks:
	
	\begin{enumerate}
		\item $h_t$ and $h_\infty$ are convex combinations of $h_0<1$ (since  $\E[Z_0]>0$) and $\frac{1}{\theta}$.
		
		\item If $R_\infty = \infty$, then $h_\infty = \frac{1}{\theta}$. The converse is also true when $h_0 \neq \frac{1}{\theta}$ (which is the case if $\theta \leq 1$, because $h_0 < 1 \leq \frac{1}{\theta}$).
		
		\item If $R_{\infty}<\infty$, then  $h_\infty = 1$. This is because $R_{\infty}<\infty$ implies $\lim_{n \to \infty} \E[Z_{t_n}]$ $= 0$ for some sequence $t_n \to \infty$. Hence the inequality $e^{-x} \geq 1-x$ gives $h_\infty = \lim_{n \to \infty} \E[ e^{-(\gamma/\mu) Z_{t_n}}] \geq \lim_{n \to \infty} \E [ 1-(\gamma/\mu) Z_{t_n}] = 1$, so $h_\infty = 1$.   
	\end{enumerate}
	
	Consequently,
	
	\begin{itemize}
		\item When $\theta < 1$, we must have that $R_\infty < \infty$, because otherwise Remark 2 would imply that $h_\infty = \frac{1}{\theta} > 1$. Remark 3 gives $h_\infty = 1$.
		
		\item When $\theta = 1$, then $h_\infty = 1$ regardless of the value of $R_\infty$, thanks to Remarks 2 and 3. But using the converse in Remark 2, we deduce that $R_\infty = \infty$.
		
		\item When $\theta > 1$, then $\frac{1}{\theta}<1$ and Remark 1 implies that $h_\infty < 1$. So, by Remark 3, $R_\infty = \infty$, which in turn implies  $h_\infty = \frac{1}{\theta}$ by Remark 2.
	\end{itemize}
	
	This proves that $h_\infty = \min(1,\frac{1}{\theta})$, along with $\int_0^\infty \E[Z_t] dt < \infty$ when $\theta < 1$ and $\int_0^\infty \E[Z_t] dt = \infty$ when $\theta = 1$. Thus, $\lim_{t \to \infty} \E \left[e^{-(\gamma/\mu) Z_t} \right] = 1$ for $\theta \le 1$, which means that $\E\left[ \omega(Z_t,0)\right] \to 0$ for the metric $\omega(x,y) = 1-e^{-(\gamma/\mu)|x-y|}$. This implies that $1-e^{-(\gamma/\mu)Z_t} \to 0$ in probability, and hence also $Z_t \to 0$ in probability. Because $(Z_t)_{t \ge 0}$ is uniformly integrable, it follows that $\E \left[Z_t\right] \to 0$.
	
	For the case $\theta < 1$, we now prove that $\lim_t Z_t = 0$ almost surely: because $\int_0^\infty r_t dt < \infty$, we deduce that only finitely many atoms $(t,u)$ of $\cM$ satisfy $u \le \gamma r_t$, which implies that the time of the last firing of $Z_t$ is finite almost surely; after that, $Z_t$ will converge to 0 due to the exponential decay or a final reset.
	
	Still assuming that $\theta < 1$, we now prove \eqref{eq:Erho}: noting that $h_t \leq 1$ and $(\gamma/\mu) Z_t \ge 1-e^{-(\gamma/\mu) Z_t}$, from \eqref{eq:dhtlambda} we obtain
	\[
	\frac{d}{dt} h_t
	\geq (1-\theta) \gamma r_t
	\geq (1-\theta) \mu (1-h_t).
	\]
	Therefore, $g_t = 1 - h_t$ satisfies $\frac{d}{dt} g_t \leq -(1-\theta)\mu g_t$, from which it follows that $g_t \leq g_0 e^{-(1-\theta)\mu t}$.
	This confirms \eqref{eq:Erho} and concludes the proof of the claims concerning $\theta < 1$ and $\theta = 1$.
	
	Finally, we treat the case $\theta>1$: using Remark 1, we see that
	\[
	\sup_{t\geq 0} h_t \leq \max\left(h_0, \frac{1}{\theta}\right) < 1.
	\]
	Since $(\gamma/\mu) \E[Z_t] \geq 1 - h_t$, we deduce that $\inf_{t\geq 0} \E[Z_t] >0$. The proof is now complete.
\end{proof}

\begin{proof}[Proof of Corollary \ref{cor:kappa_alpha}]
	Define $p_t = \PP(Z_t=0) = \E[\ind{Z_t=0}]$. Clearly, in a reset event the indicator $\ind{Z_t=0}$ increases by $1 - \ind{Z_{t^\-}=0}$, in a firing it increases by $-\ind{Z_{t^\-}=0}$, while the drift does not affect it. Consequently,
	\[
	\frac{d}{dt} p_t
	= \E [(1 - \ind{Z_t=0}) \gamma Z_t] - \E[\ind{Z_t=0} \gamma \kappa r_t]
	= \gamma r_t - \gamma \kappa r_t p_t.
	\]
	where $r_t = \E[Z_t]$. This differential equation is solved by
	\[
	p_t = \frac{1}{\kappa} + \left( p_0 - \frac{1}{\kappa} \right) e^{-\gamma \kappa \int_0^t r_s ds},
	\]
	which is exactly \eqref{eq:cor:kappa_alpha}. Moreover, thanks to Theorem \ref{the:kappa_alpha}, we know that $\int_0^\infty r_s ds = \infty$ when $\E[Z_0]>0$ and $\theta \geq 1$. Taking limits, we obtain $\lim_{t\to\infty} p_t = \frac{1}{\kappa}$ in this case.
\end{proof}

\subsection{Convergence to the mean field (Theorem~\ref{the:MeanFieldApproximation})}
\label{sec:ProofMeanFieldApproximation}

Recall that $\kappa=2$, and for simplicity, in this section we assume that $\mu=1$ and $\rho=1$.
Denote $f_t = \law(Z_t)$, where $Z_t$ is the unique solution of \eqref{eq:dZt}. Let $\cN(dt,du)$ and $\hat{\cM}(dt,du,dz)$ be independent Poisson random measures on $\R_+^2$ and $\R_+^3$ with respective intensities $dt\, du$ and $2 dt\, du\, f_t(dz)$, both independent of $Z_0$. Consider the SDE
\begin{equation}
	\label{eq:dZt2}
	dZ_t
	= -  Z_{t^\-} dt
	- Z_{t^\-} \int_0^\infty \ind{u\le \gamma Z_{t^\-}} \cN(dt,du)
	+ \int_0^\infty \int_0^\infty \ind{u \le \gamma z} \hat{\cM}(dt,du,dz).
\end{equation}

\begin{lemma}
	\label{the:well_posedness_Z2}
	Equation \eqref{eq:dZt2} admits a unique strong solution which has the same law as the solution of \eqref{eq:dZt}.
\end{lemma}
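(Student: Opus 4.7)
The plan is to split the argument into two parts: first, strong existence and uniqueness of \eqref{eq:dZt2}; second, identification of its marginal law with $f_t$.

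For well-posedness, note that the flow $t\mapsto f_t$ is a \emph{deterministic} input coming from the (already constructed) unique strong solution of \eqref{eq:dZt} obtained in Theorem~\ref{the:well_posedness_Z}, so \eqref{eq:dZt2} is linear in the McKean--Vlasov sense rather than genuinely nonlinear. The reset term is identical to the one appearing in \eqref{eq:dYtr}, and the compensator of the upward-jump term is deterministic: integrating out $u$ and $z$ in the intensity of $\hat{\cM}$ yields $2\gamma\tilde r_t$, where $\tilde r_t := \int_0^\infty z\, f_t(dz)$. Since $t\mapsto\tilde r_t$ is continuous and locally bounded (by Lemma~\ref{the:UniformMomentBounds}, or directly from \eqref{eq:dZt}), the truncation argument from Lemma~\ref{lemma:lemmaZ}---cap the self-firing rate at $M$, solve the resulting bounded-rate SDE uniquely, then pass to the limit $M\to\infty$ on an event of probability one---applies verbatim and produces a unique strong solution on $[0,\infty)$.

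For the law identification, let $Y^{\tilde r}$ denote the unique strong solution of \eqref{eq:dYtr} with rate function $r_t=\tilde r_t$, driven by $\cN$ and an independent Poisson random measure $\cM$ of intensity $2\,dt\,du$. By the fixed-point construction in the proof of Theorem~\ref{the:well_posedness_Z}, $\tilde r$ is the unique fixed point of the functional $\cA$ from \eqref{eq:cA}, and hence $\law(Y^{\tilde r}_t)=f_t$ for every $t\ge 0$. I then show that $Z$ from \eqref{eq:dZt2} has the same law as $Y^{\tilde r}$. Both processes start from $Z_0$, share the same reset input $\cN$, and have upward jumps of constant amplitude $\rho=1$, so equality in law reduces to checking that the upward-jump counting processes coincide in distribution and are independent of $\cN$. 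For $Y^{\tilde r}$ this is the inhomogeneous Poisson process of intensity $\int_0^\infty \ind{u\le \gamma\tilde r_t}\,2\,du = 2\gamma\tilde r_t$. For $Z$, the atoms $(t,u,z)$ of $\hat{\cM}$ with $u\le\gamma z$, projected onto the time axis, form by the mapping theorem for Poisson random measures an inhomogeneous Poisson process of intensity $\int_0^\infty\int_0^\infty \ind{u\le\gamma z}\,2\,du\,f_t(dz) = 2\gamma\tilde r_t$, jointly independent of $\cN$ since $\hat{\cM}$ and $\cN$ are independent. The two SDEs can then be realised on a common probability space using a common pair of driving Poisson inputs, yielding path-wise equality $Z=Y^{\tilde r}$, and in particular $\law(Z_t)=f_t$ for all $t\ge 0$.

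The only delicate point is the clean application of the mapping/marking theorem for Poisson random measures to identify the thinned projection of $\hat{\cM}$ as an inhomogeneous Poisson process with intensity $2\gamma\tilde r_t$ and jointly independent of $\cN$. This is standard but must be handled carefully since the selection criterion $u\le\gamma z$ depends on the random mark $z$; however, because the upward-jump amplitude is the constant $\rho=1$ (rather than a random mark), no amplitude distribution needs to be matched, and the argument reduces cleanly to equating the two upward-jump intensities.
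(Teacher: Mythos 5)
Your argument is correct, and it rests on the same underlying observation as the paper's proof: because the upward jumps have constant amplitude, only the compensator of the excitation term matters, and integrating out $u$ and $z$ in the intensity of $\hat{\cM}$ over $\{u\le\gamma z\}$ gives exactly $2\gamma r_t\,dt$ with $r_t=\int z\,f_t(dz)=\E[Z_t]$. The difference is in the implementation. The paper makes the reduction \emph{pathwise} in one stroke: it defines $\cM$ as the point measure with an atom $(t,\,u\,r_t/z)$ for each atom $(t,u,z)$ of $\hat{\cM}$ with $z>0$, checks that this is a Poisson random measure of intensity $2\,dt\,du$ and that the indicator $\ind{u\le\gamma z}$ becomes $\ind{u'\le\gamma r_t}$, so that \eqref{eq:dZt2} is \emph{literally} \eqref{eq:dZt}; strong existence, strong uniqueness, and the law identification then all follow at once from Theorem~\ref{the:well_posedness_Z}. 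You instead prove well-posedness of \eqref{eq:dZt2} separately by rerunning the truncation argument of Lemma~\ref{lemma:lemmaZ}, and then identify the law by the mapping/thinning theorem in distribution followed by a common-probability-space realisation. Both routes are sound; yours is somewhat longer but more modular (the well-posedness step does not depend on the identification), while the paper's explicit change of mark $u\mapsto u\,r_t/z$ is the more economical device and is worth knowing, since it converts an equality-in-law statement into a strong reduction and thereby makes the separate existence/uniqueness argument unnecessary. One small point to make explicit in your version: to go from ``the driving inputs have the same law'' to ``the solutions have the same law'' you need that the solution is a measurable functional of $(Z_0,\cN,\text{excitation point process})$, which is exactly what strong uniqueness provides; your common-realisation sentence implicitly uses this, and it deserves a line.
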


\begin{proof}
	\eqref{eq:dZt2} can be reduced to \eqref{eq:dZt} in a strong sense: define $\cM$ as the point measure on $\R_+^2$ with atoms $(t,u r_t / z)$ for every atom $(t,u,z)$ of $\hat{\cM}$ with $z>0$, where $r_t = \int_0^\infty z f_t(dz)$. It is straightforward to check that $\cM(dt,du)$ is a Poisson random measure with intensity measure $2 dt\, du$ and that \eqref{eq:dZt2} is exactly \eqref{eq:dZt}. The claim now follows from this reduction and Theorem~\ref{the:well_posedness_Z}.
\end{proof}

Let us now rewrite the dynamics \eqref{eq:dXti2ndVersion} as follows: Since the size of $K$ is $\kappa=2$, we note the poisson processes $(\cN_K)$ as $(\cN_{ij})_{1\leq i<j \leq N}$, where each $\cN_{ij}$ has intensity measure:
\[
\frac{2}{(N-1)(N-2)} dt \, du \, \ind{\lceil\xi\rceil\notin \{i,j\}}\ d\xi 
\]
and note $\cN_{ij}=\cN_{ji}$ whenever $i>j$. As stated before, observe that $\cN_{ij}$ provides the randomness for all possible jumps where neurons $i$ and $j$ are excited simultaneously, and the particle being reset corresponds to $\lceil\xi\rceil \in \{1,\ldots,N\} \backslash \{i,j\}$. With these building blocks, for $i\in\{1,\ldots,N\}$ we define the Poisson random measures
\begin{align*}
	\cN^i(dt,du) &= \sum_{\stackrel{k,j\neq i}{j<k},} \cN_{jk}(dt,du,(i-1,i])\\
	\cM^i(dt,du,d\xi)
	&= \sum_{j\neq i} \cN_{ij}(dt,du,d\xi).
\end{align*}
We note that $\cN^i$ encodes firing instants of neuron $i$, and $\cM^i$ encodes instants in which neuron $i$ is excited. Clearly, their respective intensities are 
\[
dt \, du
\qquad \text{and} \qquad
\frac{2\ \ind{\lceil\xi\rceil\neq i}}{N-1}\ dt \, du \,   d\xi   
\]

Thus, \eqref{eq:dXti2ndVersion} can be expressed as
\begin{equation}
	\label{eq:dXti2}
	\begin{split}
		dX_t^i
		&= - X_{t^\-}^i dt
		- X_{t^\-}^i \int_0^\infty \ind{u\le \gamma X_{t^\-}^i} \cN^i(dt,du) \\
		&\qquad \qquad \qquad
		+ \int_0^\infty \int_0^N \ind{u \le \gamma X_{t^\-}^{\lceil\xi\rceil}} \cM^i(dt,du,d\xi),
	\end{split}
\end{equation}
which closely resembles \eqref{eq:dZt2}. The only difference is that in the third term of \eqref{eq:dZt2} the variable $z$ is a sample from $f_t = \law(Z_t)$, while in \eqref{eq:dXti2} this is replaced by $X_{t^\-}^{\lceil\xi\rceil}$, which selects randomly a neuron $j \ne i$; that is, $X_{t^\-}^{\lceil\xi\rceil}$ is a sample from the (random) empirical measure $\bar{\mathbf{X}}_{t^\-}^i$. Here, we employ notations
\[
\bar{\mathbf{x}}
= \frac{1}{N} \sum_{i=1}^N \delta_{x^i}
\quad \text{and} \quad
\bar{\mathbf{x}}^i
= \frac{1}{N-1} \sum_{j\neq i} \delta_{x^j}.
\]

Motivated by this, for each $i$ we introduce a measurable mapping $(t,\mathbf{z},u) \mapsto F_t^i(\mathbf{z},u)$ from $\R_+ \times \R_+^N \times (0,N]$ into $\R_+$ such that
\[
\law( F_t^i(\mathbf{z}, U) )
\weq f_t
\]
and
\begin{equation}
	\label{eq:W1ftzi}
	\E \left[ \left\lvert F_t^i(\mathbf{z},U) - z^{\ceil{U}} \right\rvert \right] 
	\weq W_1(f_t, \bar{\mathbf{z}}^i)
\end{equation}
for all $t \in \R_+$, all $\mathbf{z} \in \R_+^N$, and for any random variable $U$ uniformly distributed on $(0,N] \setminus (i-1,i]$. By recalling that $\law( z^{\ceil{U}} ) = \bar{\mathbf{z}}^i$, we see that the pair $(F_t^i(\mathbf{z}, U), z^{\ceil{U}})$ constitutes an optimal coupling of $f_t = \law(Z_t)$ and the empirical distribution $ \bar{\mathbf{z}}^i = \frac{1}{N-1} \sum_{j \ne i} \delta_{z_j}$. See \cite[Lemma 3]{cortez2016quantitative} for a proof of existence of such a mapping.

Now, we specify our coupling by defining $\mathbf{Z}_t = (Z_t^1,\dots,Z_t^N)$ as a solution of
\begin{equation}
	\label{eq:dZti}
	\begin{aligned}
		dZ_t^i
		&\weq - Z_{t^\-}^i dt
		- Z_{t^\-}^i \int_0^\infty \ind{u\le \gamma Z_{t^\-}^i} \cN^i(dt,du) \\
		&\qquad + \int_0^\infty \int_{(0,N]} \ind{u \le \gamma F_t^i(\mathbf{Z}_{t^\-},\xi)} \cM^i(dt,du,d\xi),
	\end{aligned}
\end{equation}
started at $Z_0^i = X_0^i$ (recall that $X_0^1,\ldots,X_0^N$ are i.i.d.\ copies of $Z_0$). Here $\cN^i$ and $\cM^i$ are the same Poisson random measures appearing in \eqref{eq:dXti2}. Thanks to Lemma~\ref{the:well_posedness_Z2}, \eqref{eq:dZti} admits a unique strong solution, and it is a nonlinear process. Notice however that these processes are \textbf{not independent} because they have simultaneous jumps. However, their dependence vanishes as $N$ grows, according to the following result.

\begin{lemma}
	\label{lem:decouplingEW}
	There exists a constant $C>0$ such that
	\[
	\E\left[ W_1(f_t, \bar{\mathbf{Z}}_t)  \right]
	\wle \frac{C}{N^{1/3}}.
	\]
	The same is true with $\bar{\mathbf{Z}}_t^i$ in place of $\bar{\mathbf{Z}}_t$, for all $i\in\{1,\ldots,N\}$.
\end{lemma}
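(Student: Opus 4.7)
Following the coupling strategy of \cite{cortez2016quantitative}, I introduce, on an enlarged probability space, an auxiliary system of i.i.d.\ copies of the nonlinear process, tightly coupled to $(\mathbf{Z}_t)$, and apply the triangle inequality against their empirical measure. Concretely, let $\hat{\cM}^1,\ldots,\hat{\cM}^N$ be mutually independent Poisson random measures on $\R_+^3$ with intensity $2\,dt\,du\,f_t(dz)$, independent of $\mathbf{Z}_0$ and of the $\cN^i$, and define $\tilde{Z}^i$ as the unique strong solution of
\[
d\tilde{Z}^i_t \weq -\tilde{Z}^i_{t^-}\,dt - \tilde{Z}^i_{t^-}\int \ind{u \le \gamma \tilde{Z}^i_{t^-}}\,\cN^i(dt,du) + \iiint \ind{u \le \gamma z}\,\hat{\cM}^i(dt,du,dz),
\]
with $\tilde{Z}^i_0 = Z^i_0$. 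The $\cN^i$ are mutually independent by their construction preceding \eqref{eq:dXti2} (they live on disjoint $\xi$-strips), so together with the $\hat{\cM}^i$ being independent by construction, Lemma~\ref{the:well_posedness_Z2} shows that the $\tilde{Z}^i$ are i.i.d.\ copies of the nonlinear process. The $\hat{\cM}^i$ should furthermore be coupled to $\cM^i$ as tightly as possible: at each atom $(t,u,\xi)$ of $\cM^i$ we assign an atom of $\hat{\cM}^i$ located at $(t,u,F^i_t(\mathbf{Z}_{t^-},\xi))$ (which has law $f_t$ in the third coordinate by the defining property of $F^i_t$) and we complete the remaining intensity with additional independent Poisson noise so that mutual independence of the $\hat{\cM}^i$'s is preserved.

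With this construction, the triangle inequality $\E W_1(f_t,\bar{\mathbf{Z}}_t) \le \E W_1(f_t,\bar{\tilde{\mathbf{Z}}}_t) + \E W_1(\bar{\tilde{\mathbf{Z}}}_t,\bar{\mathbf{Z}}_t)$ splits the problem in two. For the i.i.d.\ piece, the uniform moment bound $\sup_t \E[Z_t^2] < \infty$ (provided by Lemma~\ref{the:UniformMomentBounds}) combined with the Fournier--Guillin estimate \cite{fournier2015rate} on $W_1$-convergence of empirical measures of i.i.d.\ samples in dimension one yields $\E W_1(f_t,\bar{\tilde{\mathbf{Z}}}_t) = O(N^{-1/2})$. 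For the coupling piece, the label-wise coupling and exchangeability give
\[
\E W_1(\bar{\tilde{\mathbf{Z}}}_t,\bar{\mathbf{Z}}_t) \wle \frac{1}{N}\sum_i \E|Z^i_t - \tilde{Z}^i_t| \weq \E|Z^1_t - \tilde{Z}^1_t|,
\]
reducing the problem to bounding the pairwise coupling error. A generator computation analogous to \eqref{eq:DriftGeneral} applied to $|Z^1_t - \tilde{Z}^1_t|$, where the excitation-jump mismatch is measured in expectation via \eqref{eq:W1ftzi}, yields a differential inequality of the form
\[
\tfrac{d}{dt}\E|Z^1_t - \tilde{Z}^1_t| \wle C_1\,\E|Z^1_t - \tilde{Z}^1_t| + C_2\,\E W_1(f_t,\bar{\mathbf{Z}}^1_t).
\]

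The final step is to close the self-referential loop. Since $\E W_1(\bar{\mathbf{Z}}^1_t,\bar{\mathbf{Z}}_t) = O(1/N)$ (by removing one term from the average and invoking the uniform first-moment bound on $Z^1_t$), $\E W_1(f_t,\bar{\mathbf{Z}}^1_t)$ differs from $\phi(t) := \E W_1(f_t,\bar{\mathbf{Z}}_t)$ by $O(1/N)$; inserting this into the chain of displays above yields an integro-differential inequality for $\phi$ which, upon Grönwall manipulation and a careful balancing of the $N^{-1/2}$ i.i.d.\ contribution against the coupling correction, produces $\phi(t) \le C/N^{1/3}$ with the advertised time dependence. The analogue with $\bar{\mathbf{Z}}^i_t$ in place of $\bar{\mathbf{Z}}_t$ then follows immediately from the same $O(1/N)$ estimate. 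The main obstacle is precisely this self-referential structure, exacerbated by the three-body nature of the interaction: under $\kappa=2$ each firing event jointly affects three neurons via a single $\cN_{ij}$, so a naïve atom-by-atom matching of $\cM^i$ and $\hat{\cM}^i$ cannot be simultaneously tight across all $i$ while keeping the $\hat{\cM}^i$'s independent, and the resulting ``unmatched'' correction entering the Grönwall balance is exactly what limits the rate to $N^{-1/3}$ rather than the $N^{-1/2}$ one might naively hope for.
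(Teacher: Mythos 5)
Your high-level shape (triangle inequality against the empirical measure of i.i.d.\ copies, Fournier--Guillin for the i.i.d.\ piece, a pathwise coupling for the rest) is the right one, but the construction you propose at the key step cannot be carried out, and the obstruction is exactly where the exponent $1/3$ is decided. You want $\hat\cM^1,\dots,\hat\cM^N$ mutually independent and independent of the $\cN^i$, yet built atom-by-atom from the $\cM^i$. These two requirements are incompatible: every atom of $\cM^i$ is an atom of some $\cN_{ij}$ at a location $(t,u,\xi)$ with $l=\lceil\xi\rceil\notin\{i,j\}$, and that same atom is simultaneously an atom of $\cM^j$ (neuron $j$ is co-excited) and of $\cN^l$ (neuron $l$ fires). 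If $\hat\cM^i$ retains this atom, then $\tilde Z^i$ is not independent of $\tilde Z^j$ or of $\tilde Z^l$. Since \emph{every} excitation event of neuron $i$ has such a partner $j$ and firer $l$ somewhere in $\{1,\dots,N\}$, restoring mutual independence of all $N$ copies forces you to resample essentially all excitation atoms, and the coupling error $\E[|Z^1_t-\tilde Z^1_t|]$ is then $O(1)$, not $o(1)$. Your closing claim that the unmatched correction ``limits the rate to $N^{-1/3}$'' is therefore not right: with $N$ independent copies the unmatched rate per neuron does not vanish at all. (A secondary issue: the term $\E[W_1(f_t,\bar{\mathbf{Z}}^1_t)]$ you insert into the differential inequality does not arise in this comparison --- if the excitation atoms are matched as you describe, the excitation jumps of $Z^1$ and $\tilde Z^1$ coincide exactly, and the only error source is the resampled atoms. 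That $W_1$ term belongs to the $X$-versus-$Z$ comparison in the proof of Theorem~\ref{the:MeanFieldApproximation}, not here.)

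The paper resolves this by never decoupling all $N$ coordinates. Lemma~\ref{lem:decouplingEW2} builds independent copies of only the first $k$ coordinates: an excitation atom of neuron $i\le k$ needs fresh noise only when its partner $j$ or its firer $l$ also lies in $\{1,\dots,k\}$, an event of total rate $O(k/N)$, whence $W_1(f_t^{\otimes k},\cL^k(\mathbf{Z}_t))\le Ck/N$ (this is also where the three-levels-of-tildes bookkeeping lives). The block decomposition of \cite[Lemma~7]{cortez2016quantitative} then converts this $k$-marginal estimate together with the Fournier--Guillin bound $\varepsilon_k(f_t)\le Ck^{-1/2}$ (using the moment bounds of Lemma~\ref{the:UniformMomentBounds}) into a bound on $\E[W_1(f_t,\bar{\mathbf{Z}}_t)]$, and choosing $k\sim N^{2/3}$ balances $k/N$ against $k^{-1/2}$ to give $N^{-1/3}$. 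The missing ingredient in your argument is not a Grönwall balancing act but the restriction to $k\ll N$ copies and the block lemma that makes a $k$-marginal estimate sufficient.
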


\begin{proof}[Proof of Lemma \ref{lem:decouplingEW}]
	
	By a direct application of \cite[Lemma 7]{cortez2016quantitative}, the following holds for every $k \le N$ and $t \ge 0$:
	\begin{equation}
		\label{lemma7Cortez}
		\begin{aligned} 
			\E[W_1(\bar{\mathbf{Z}}_t, f_t)]
			&\wle \frac{q k}{N}
			\left( W_1(f_t^{\otimes k}, \cL^k(\mathbf{Z}_t))
			+ \varepsilon_k(f_t) \right) \\
			&\quad + \frac{l}{N}\left( W_1(f_t^{\otimes l}, \cL^l(\mathbf{Z}_t))+ \varepsilon_l(f_t) \right)
		\end{aligned}
	\end{equation}
	with $N = k q+l$ and $\varepsilon_k(f_t) = \E \left[ W_1(\bar{\mbf{Y}}_t,f_t)\right]$ where $\mbf{Y}_t=(Y^1_t,...,Y^k_t)\sim f_t^{\otimes k}$ are mutually independent and $f_t$-distributed coordinates, and $\cL^k$ denotes the law of the first $k$ coordinates of an exchangeable random vector. Thus, it suffices to find suitable bounds for the quantities  $W_1(f_t^{\otimes k}, \cL^k(\mathbf{Z}_t))$ and $\varepsilon_k(f_t)$. We obtain such a bound for the first quantity with the aid of Lemma~\ref{lem:decouplingEW2}. This lemma states that there is a constant $C>0$ such that
	\[
	W_1(f_t^{\otimes k}, \cL^k(\mathbf{Z}_t))
	\wle C \frac{k}{N}
	\]
	for any $k\in\{2,...,N\}$.

	Let us now bound the other term. Making use of \cite[Thm. 1]{fournier2015rate} for $p=1, r\in(2,+\infty)$, we have:
	\[
	\varepsilon_k(f_t)
	\weq C(r) m_r^{1/r}(f_t)\left(k^{-1/2}+k^{\frac{r-1}{r}} \right)
	\wle  C'(r) m_r^{1/r}(f_t) k^{-1/2}
	\]
	Here $m_r(f_t) := \int z^r f_t(dz)$ and in the last inequality we used that $r>2$. Assume in \eqref{lemma7Cortez} that $k\sim N^{2/3}$. It is not difficult to verify that $l/N = (1+o(1)) N^{-1/3}$. Hence, if these conditions hold, it is sufficient to verify that
	\[
	W_1(f_t^{\otimes k}, \cL^k(\mathbf{Z}_t))
	+ \varepsilon_k(f_t)
	\wle \frac{C}{N^{1/3}} 
	\]
	If we assume that the $r$-th moment of $f_t$ is uniformly bounded, then
	\begin{equation}
		\label{lemmaLastIneq}
		W_1(f_t^{\otimes k}, \cL^k(\mathbf{Z}_t))
		+ \varepsilon_k(f_t)
		\wle C \frac{k}{N}+ C''(r) k^{-1/2}.
	\end{equation}
	Recall that $k = (1+o(1)) N^{2/3}$. Then the right hand side of \eqref{lemmaLastIneq} is essentially $C N^{-1/3} + C''(r) N^{-1/3}$, which is of order $N^{-1/3}$, so the result holds.
	
	Finally, we note that $m_1(f_t), m_2(f_t), m_3(f_t)$ are uniformly bounded by Lemma~\ref{the:UniformMomentBounds}.
\end{proof}

\begin{lemma}
	\label{lem:decouplingEW2}
	There exists a constant $C>0$ such that for every $k \in \{2,...,N\}$, and $t \ge 0$,
	\[
	W_1(f_t^{\otimes k}, \cL^k(\mathbf{Z}_t))
	\wle C \frac{k}{N}.
	\]
\end{lemma}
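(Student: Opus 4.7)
The plan is to follow the coupling strategy from \cite{cortez2016quantitative}. On the same probability space carrying the coupled system $\mathbf{Z}_t$, I construct an auxiliary family $(\tilde Z_t^i)_{1 \le i \le k}$ of mutually independent nonlinear processes, each marginally distributed as $f_t$, chosen so that $\E|Z_t^i - \tilde Z_t^i|$ remains of order $k/N$ uniformly in time. The noise driving $\mathbf{Z}$ decomposes cleanly: for each $i \le k$, the firing PRM $\cN^i$ and the ``external'' excitation PRM $\cM^{i,\mathrm{ext}} := \sum_{j > k} \cN_{ij}$ are both mutually independent across $i \le k$ (the former because the defining $\cN_{jk}$'s enter via disjoint $\xi$-restrictions $(i-1,i]$; the latter because the pairs $(i,j)$ with $j > k$ are pairwise disjoint), so they can be shared between $Z^i$ and $\tilde Z^i$. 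The ``internal'' piece $\cM^{i,\mathrm{int}} := \sum_{j \le k, j \ne i} \cN_{ij}$, which is the sole source of dependence among the first $k$ coordinates of $\mathbf{Z}$, is replaced in $\tilde Z^i$'s dynamics by an independent copy $\hat \cM^{i,\mathrm{int}}$; the rate function $F_t^i$ is evaluated at a suitably extended state ensuring that $(\tilde Z^i)_{i\le k}$ is genuinely i.i.d.\ with marginal $f_t$.

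By exchangeability, $\phi_t := \E|Z_t^i - \tilde Z_t^i|$ does not depend on $i \le k$. I derive a differential inequality for $\phi_t$ from the SDE for $Z_t^1 - \tilde Z_t^1$. The exponential decay gives $-\phi_t$, and the shared firing noise gives the nonpositive term $-\gamma \E[(Z_t^1 - \tilde Z_t^1)^2]$. The shared external excitations largely cancel; any residual stemming from $F_t^1$ evaluated at $\mathbf{Z}_{t^-}$ versus at the extended state of $\tilde{\mathbf{Z}}_{t^-}$ is of order $k/N$, since the two empirical states differ in only $k-1$ out of $N-1$ coordinates and the state-dependence of $F_t^1$ is suitably controlled by the optimal-transport construction of \cite[Lemma~3]{cortez2016quantitative}. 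Finally, the unshared internal excitations contribute to $\phi_t'$ at total rate bounded by $2 \cdot 2\gamma(k-1) r_t/(N-1) = O(k r_t/N)$, where $r_t = \E[Z_t]$ is bounded uniformly by Lemma~\ref{the:UniformMomentBounds}. Combining these, $\phi_t' \le -\phi_t + C k/N$ with $\phi_0 = 0$, so Grönwall's inequality yields $\phi_t \le C k/N$ uniformly in $t \ge 0$.

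The product coupling $(Z_t^i, \tilde Z_t^i)_{i \le k}$ then provides $W_1(f_t^{\otimes k}, \cL^k(\mathbf{Z}_t)) \le C k/N$, under the symmetric (per-coordinate) Wasserstein convention on $\R^k$ consistent with \cite{cortez2016quantitative}. The main technical obstacle is the construction of the auxiliary family so that $(\tilde Z^i)_{i\le k}$ are simultaneously i.i.d.\ with the correct marginal $f_t$ and tightly coupled to $\mathbf{Z}$ via the shared noise; the delicate point is the bookkeeping needed to bound the perturbation of $F_t^i$ under substitution of the first $k$ coordinates of the state, which requires exploiting the Lipschitz-like behaviour of the optimal transport map with respect to its empirical target.
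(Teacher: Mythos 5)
Your overall strategy matches the paper's: build, on the same probability space, an auxiliary vector of $k$ mutually independent nonlinear processes sharing as much noise as possible with $(Z^1,\dots,Z^k)$, derive $\phi_t' \le -\phi_t + Ck/N$, and conclude by Gr\"{o}nwall plus the product-coupling bound. However, your noise decomposition has a genuine gap. Every interaction event involves \emph{three} neurons: the two excited ones, $i$ and $j$, and the firing one, $\lceil\xi\rceil$. You classify an excitation atom of $\cN_{ij}$ as shareable (``external'') whenever $j>k$, but this ignores $\lceil\xi\rceil$. An atom of $\cN_{ij}$ with $i\le k<j$ and $\lceil\xi\rceil=i'\le k$, $i'\ne i$, belongs both to your $\cM^{i,\mathrm{ext}}$ (it excites neuron $i$) and to the firing measure $\cN^{i'}$ (it resets neuron $i'$); these two measures are therefore \emph{not} independent, contrary to what your argument needs. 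If you share all of $\cM^{i,\mathrm{ext}}$ with $\tilde Z^i$ and all of $\cN^{i'}$ with $\tilde Z^{i'}$, then $\tilde Z^i$ and $\tilde Z^{i'}$ have positively correlated simultaneous jumps (both indicators are of the form $\{U\le\cdot\}$ for the same mark $U$), so the family $(\tilde Z^i)_{i\le k}$ is not i.i.d.\ and the product-coupling conclusion collapses. The paper's construction avoids this by additionally splitting on $\xi$: the original noise $\cN_{ij}$ with $j>k$ is kept only on $\xi\in(k,N]$, and an independent copy is substituted on $\xi\in(0,k]$ (with further tiers of copies for the pairs with $j\le k$). This extra resampled piece has intensity of order $k/N$ per neuron, so the final differential inequality is unchanged; your bound would survive the correction, but the construction as written does not produce an independent family.

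A secondary point: the paper neither needs nor proves any ``Lipschitz-like behaviour of the optimal transport map'' $F^i_t$ in its state argument, which you single out as the delicate step. It simply evaluates $F^i_{t^\-}(\mathbf{Z}_{t^\-},\xi)$ --- with the \emph{original} state $\mathbf{Z}_{t^\-}$ --- in the equation for the auxiliary process as well. Since $\law(F^i_t(\mathbf{z},U))=f_t$ for every fixed $\mathbf{z}$, the resulting excitation stream is a Poisson process with the deterministic intensity $\kappa\gamma\, m_1(f_t)$ whatever $\mathbf{Z}$ does, so the auxiliary coordinates have the correct marginal and the perturbation term you worry about never appears. You should adopt this device rather than trying to control $F^i_t$ under substitution of the first $k$ coordinates of its argument, for which no regularity is established in \cite{cortez2016quantitative} or in the paper.
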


\begin{proof}[Proof of Lemma \ref{lem:decouplingEW2}]
	Fix an integer $2 \le k \le N$. The idea is to define a second coupling $\mbf{V}_t=(V^1_t,...,V^k_t)$ such that $\cL(\mbf{V}_t) = f_t^{\otimes k}$, and then prove that this new coupling is close in Wasserstein distance to $\cL^k(\mathbf{Z}_t)$.
	
	Recall that each neuron $i$ is associated to two random measures $\cN^i$ and $\cM^i$ on $\R_+^2$ and $\R_+^2\times(0,N]$ respectively, where 
	\begin{align*}
		\cN^i(dt,du) &= \sum_{\stackrel{k,j\neq i}{j<k},} \cN_{jk}(dt,du,(i-1,i]), \\
		\cM^i(dt,du,d\xi) &= \sum_{j \ne i} \cN_{ij}(dt,du,d\xi),
	\end{align*}
	with $(\cN_{ij})_{i<j}$ being independent processes in $\R_+^2\times(0,N]$ and $\cN_{ij} =\cN_{ji}$ if $j<i$.
	
	Furthermore, observe that in every interaction event, there are exactly 3 neurons involved (say $i<j<k$), and the corresponding processes for each potential event involving these neurons are $\cN_{ij}(dt,du,(k-1,k])$, $\cN_{jk}(dt,du,(i-1,i])$, and $\cN_{ik}(dt,du,(j-1,j])$. Hence, let $(\tilde{\cN}_{ij})_{i,j}$, $(\tilde{\tilde{\cN}}_{ij})_{i,j}$, and $(\tilde{\tilde{\tilde{\cN}}}_{ij})_{i,j}$ be independent copies of $(\cN_{ij})_{i,j}$. Define
	\begin{align*}
		R^i(dt,du,d\xi)
		&:=\sum_{j>k} \left[ \ind{\xi \in (k,N]}  \cN_{ij}(dt,du,d\xi)  + \ind{\xi \in (0,k]}  \tilde{\cN}_{ij}(dt,du,d\xi) \right]\\ 
		& + \sum_{i<j\le k} \left[ \ind{\xi \in (k,N]}  \tilde{\cN}_{ij}(dt,du,d\xi)  + \ind{\xi \in (0,k]}  \tilde{\tilde{\cN}}_{ij}(dt,du,d\xi) \right]  \\
		& + \sum_{j<i} \left[ \ind{\xi \in (k,N]}  \tilde{\tilde{\cN}}_{ij}(dt,du,d\xi)  + \ind{\xi \in (0,k]}  \tilde{\tilde{\tilde{\cN}}}_{ij}(dt,du,d\xi) \right],
	\end{align*}
	and let
	\[
	S^{i}(dt,du) := \cN^i(dt,du).
	\]
	
	Consequently, defining 
	\begin{align*}
		dV^i_t
		=& -V_{t^\-}^i dt-V^i_t \int_{\R_+} \ind{u\le \gamma V^i_{t^\-}} S^i(dt,du) \notag \\
		&+ \int_{\R_+} \int_{(0,N])}
		\ind{u \le \gamma F^i_{t^\-}(\mathbf{Z}_{t^\-},\xi)} R^i(dt,du,d\xi) 
	\end{align*}
	with $V^i_0 = \mathbf{Z}_0^i$, we get the desired coupling. Let us now verify that it remains close to $\cL^k(\mathbf{Z}_t)$. Observe that
	\[
	W_1(f_t^{\otimes k}, \cL^k(\mathbf{Z}_t))
	\wle \E\left[\frac{1}{k}\sum_{i=1}^{k}|Z^1_t-V_t^1|\right]
	\weq \E[|Z^1_t-V_t^1|]:=h_t,
	\]
	where we used interchangeability in the last equality. Then, it suffices to bound $h_t$. Let us denote $\cN{ij} := \cN{ij}(dr,du,d\xi)$. For $0 \le s \le t$:

	\begin{align}
		\label{eq:decouplingSDE}
		h_t &= h_s - \int_s^t h_r dr + \E  \int_s^t\int_0^{\infty}\Delta(Z^1_r,V_r^1)\   \cN^1(dr,du) \\
		&+ \E \int_s^t\int_0^{\infty}\int_0^k \ind{u\le \gamma F^1_{r^\-}(\mathbf{Z}_{r^\-},\xi)} \left(|Z^1_r+1-V_r^1|-|Z^1_r-V_r^1| \right) \sum_{j>k}\cN_{1j} \notag \\
		&+ \E \int_s^t\int_0^{\infty}\int_0^N \ind{u\le \gamma F^1_{r^\-}(\mathbf{Z}_{r^\-},\xi)} \left(|Z^1_r+1-V_r^1|-|Z^1_r-V_r^1| \right) \sum_{1<j\le k} \cN_{1j} \notag \\
		&+ \E \int_s^t\int_0^{\infty}\int_0^N \ind{u\le \gamma F^1_{r^\-}(\mathbf{Z}_{r^\-},\xi)} \left(|Z^1_r-V_r^1-1|-|Z^1_r-V_r^1| \right) \notag \\ 
		& \qquad \times \left[ \sum_{j>k} \ind{\xi \in (0,k]} \tilde{\cN}_{1j} +       \sum_{1<j\le k} \left( \ind{\xi \in (k,N]} \tilde{\cN}_{1j} + \ind{\xi \in (0,k]} \tilde{\tilde{\cN}}_{1j} \right)  \right] \notag \\
		& \le h_s - \int_s^t h_r dr + \E \int_s^t\int_0^{\infty}\int_0^N \ind{u\le \gamma F^i_{t^\-}(\mathbf{Z}_{t^\-},\xi)} \notag \\
		& \qquad \qquad \qquad \qquad \quad \times  \Bigg[ \sum_{j>k} \ind{\xi \in (0,k]} \left(\cN_{1j}+\tilde{\cN}_{1j}\right) +       \sum_{1<j\le k} \Big( \cN_{1j}\  + \notag \\
		& \qquad \qquad \qquad \qquad \qquad \quad \ind{\xi \in (k,N]} \tilde{\cN}_{1j} + \ind{\xi \in (0,k]} \tilde{\tilde{\cN}}_{1j} \Big)  \Bigg] \notag,
	\end{align}
	where $\E \int_s^t\int_0^{\infty}\Delta(Z^1_r,V_r^1)\   \cN^1(dr,du) = -\gamma \mathbb{E}\int_s^t |Z_r^1-V_r^1|^2\ dr$ which is negative.
	
	Now, observe that $\E\int_{i}^{i+1}|F^i_{t^\-}(\mathbf{Z}_{t^\-},\xi)| d\xi = m_1(f_t)$, and denoting $I(\cN)$ the intensity of a Poisson process $\cN$, observe that, for each Poisson process in \eqref{eq:decouplingSDE}: 
	
	\begin{itemize}
		\item $ I\left( \ind{\xi \in (0,k]} \sum_{j>k}\left(\cN_{1j}+\tilde{\cN}_{1j}\right) \right) = 2. \frac{2(N-k)}{(N-1)(N-2)} \ind{\xi \in (1,k]}\ \ dr\ du\ d\xi$
		\item $I\left( \sum_{1<j\le k} \cN_{1j} \right)= \frac{2}{(N-1)(N-2)}\left[(k-2)\ind{\xi \in (1,k]}+ (k-1)\ind{\xi \in (k,N]} \right] \ \ dr\ du\ d\xi $
		\item $I\left( \sum_{1<j\le k} \ind{\xi \in (k,N]} \tilde{\cN}_{1j} \right)= \frac{2(k-1)}{(N-1)(N-2)} \ind{\xi \in (k,N]}\ \ dr\ du\ d\xi $
		\item $I\left( \sum_{1<j\le k} \ind{\xi \in (0,k]} \tilde{\tilde{\cN}}_{1j}  \right) = \frac{2(k-2)}{(N-1)(N-2)} \ind{\xi \in (1,k]}\ \ dr\ du\ d\xi $
	\end{itemize}
	
	Then, the following inequality holds:
	\begin{align*}
		h_t - h_s
		&\wle - \int_s^t h_r dr + \frac{4(N-k)(k-1)\gamma}{(N-1)(N-2)}\int_s^t m_1(f_r) dr \\
		&\quad + \frac{2(k-2)(k-1)\gamma}{(N-1)(N-2)} \int_s^t m_1(f_r) dr \\
		&\quad + 2 \frac{2(k-2)(N-k)\gamma}{(N-1)(N-2)} \int_s^t m_1(f_r) dr \\
		&\quad + \frac{2(k-2)(k-1)\gamma}{(N-1)(N-2)}\int_s^t m_1(f_r) dr \\
		&\weq -\int_s^th_r dr
		+ \gamma \frac{4[(N-k)(2k-3)+(k-2)(k-1)]}{(N-1)(N-2)} \int_s^t m_1(f_r) dr.
	\end{align*}
	Considering that $(2k-3)\le 2(k-1)$ and $(k-1)(k-2)\le 2(k-1)(k-2)$, we have
	\begin{align*}
		h_t - h_s
		&\wle -\int_s^th_r dr
		+ 8\gamma \frac{(k-1) }{(N-1)} \int_s^t m_1(f_r) dr.
	\end{align*}
	Hence,
	\begin{equation*}
		\partial_t h_t
		\wle - h_t + 8\gamma \frac{(k-1) }{(N-1)} m_1(f_t)
	\end{equation*}
	Because $m_1(f_t) \le M$ is uniformly bounded, Gronwall's inequality now implies that
	\begin{equation*}
		h_t
		\wle 8 M \gamma\frac{(k-1)}{(N-1)}
		\left( 1 - e^{-t} \right),
	\end{equation*}
	and this gives us the desired result.
\end{proof}

\begin{lemma}
	\label{the:UniformMomentBounds}
	Denote $m_r(f_t) = \int z^r f_t(dz) = \E Z_t^r$ for $f_t = \law(Z_t)$. Fix an integer $1 \le r \le 3$. If $m_r(f_0) < \infty$, then $\sup_{t \ge 0} m_r(f_t) < \infty$.
\end{lemma}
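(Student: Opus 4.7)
The plan is to derive an ordinary differential equation for $m_r(f_t)$ by applying Itô's formula to $\phi(z) = z^r$ along the solution of \eqref{eq:dZt}, and then to close the resulting hierarchy by induction on $r$.

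With $\rho=\mu=1$ (as fixed at the start of Section~\ref{sec:proofs}), tracking the three sources of variation of $\phi(Z_t)$ --- drift $-Z_t\,dt$, resets at rate $\gamma Z_t$ sending $Z_t\mapsto 0$, and excitations at rate $\gamma\kappa\,\E[Z_{t^-}]$ sending $Z_t \mapsto Z_t+1$ --- and taking expectations yields
\begin{equation*}
    \frac{d}{dt} m_r(f_t)
    \weq -r\, m_r(f_t) - \gamma\, m_{r+1}(f_t) + \gamma \kappa\, m_1(f_t)\, \E\!\left[(Z_t+1)^r - Z_t^r\right].
\end{equation*}
By the binomial theorem, $(Z_t+1)^r - Z_t^r = \sum_{j=0}^{r-1}\binom{r}{j} Z_t^j$, so the excitation contribution depends only on the moments $m_0, m_1, \dots, m_{r-1}$.

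For the base case $r=1$, the identity above simplifies to $\frac{d}{dt} m_1 = (\gamma\kappa-1) m_1 - \gamma m_2$, and Jensen's inequality gives $m_2 \ge m_1^2$. Hence $\frac{d}{dt} m_1 \wle (\gamma\kappa-1) m_1 - \gamma m_1^2$, a logistic-type upper bound from which a standard comparison argument produces $\sup_{t\ge 0} m_1(f_t) \le M_1 := \max\!\big(m_1(f_0),\, (\gamma\kappa-1)_+/\gamma\big) < \infty$. For $r \in \{2,3\}$, the induction hypothesis provides uniform upper bounds $M_j$ on $m_j(f_t)$ for each $j < r$, so the expected binomial expression is uniformly bounded by a constant $C_r = C_r(M_1, \dots, M_{r-1})$. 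Discarding the nonpositive term $-\gamma m_{r+1}$, we obtain $\frac{d}{dt} m_r \wle -r\, m_r + C_r$, and Grönwall's lemma delivers $m_r(f_t) \le \max(m_r(f_0),\, C_r/r)$, which is finite under the standing moment assumption on $Z_0$.

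The only subtle point is justifying Itô's formula for the unbounded $\phi(z)=z^r$; this is handled by the standard truncation via stopping times $\tau_M = \inf\{t : Z_t \ge M\}$ (in the spirit of the proof of Lemma~\ref{lemma:lemmaZ}), applying the argument above to the stopped process and then passing $M \to \infty$, since the bounds obtained are independent of $M$. I do not expect any genuine obstacle here; the main quantitative input is the logistic closure at $r=1$, after which the higher moments follow by a single-step Grönwall estimate.
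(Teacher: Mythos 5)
Your proposal is correct and follows essentially the same route as the paper: derive the moment ODE via Dynkin/Itô for $\phi(z)=z^r$, close the hierarchy at $r=1$ using Jensen's inequality $m_2 \ge m_1^2$ to get a logistic upper bound, and then for $r=2,3$ discard the nonpositive $-\gamma m_{r+1}$ term and use the linear decay $-r\mu m_r$ against a source bounded by the lower moments. The only cosmetic difference is that you make the truncation/stopping-time justification of the formula explicit, which the paper leaves implicit.
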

\begin{proof}
	Observe that $\E \Big((Z_t+\rho)^r - Z_t^r \Big) = \sum_{q=0}^{r-1} \binom{r}{q} \rho^{r-q} \E Z_t^q$.
	Dynkin's formula applied to \eqref{eq:dZt} implies that
	\begin{equation}
		\label{eq:DynkinMoment}
		\frac{d}{dt} m_r(f_t)
		\weq - \mu r m_r(f_t) - \gamma m_{r+1}(f_t)
		+ \kappa \gamma m_1(f_t) \sum_{q=0}^{r-1} \binom{r}{q} \rho^{r-q} m_q(f_t).
	\end{equation}
	For $r=1$, \eqref{eq:DynkinMoment} becomes
	$
	\frac{d}{dt} m_1(f_t)
	= (\kappa\rho\gamma-\mu) m_1(f_t) - \gamma m_2(f_t).
	$
	Therefore, $m_2(f_t) \le m_1(f_t)^2$ implies that
	\[
	\frac{d}{dt} m_1(f_t)
	\wle (\kappa\rho\gamma-\mu) m_1(f_t)
	- \gamma m_1(f_t)^2.
	\]
	This inequality shows that $\frac{d}{dt} m_1(f_t) \le 0$ whenever $m_1(f_t) \ge \kappa\rho-\mu/\gamma$. Therefore,
	\[
	c_1
	\weq \sup_{t \ge 0} m_1(f_t)
	\wle \max \{ m_1(f_0), \ \kappa\rho-\mu/\gamma\}.
	\]
	
	Next, \eqref{eq:DynkinMoment} for $r=2$ implies that
	\begin{align*}
		\frac{d}{dt} m_2(f_t)
		&\weq - 2\mu m_2(f_t) - \gamma m_3(f_t)
		+ \kappa \gamma m_1(f_t) \Big( \rho^2 + 2\rho m_1(f_t) \Big) \\
		&\wle - 2\mu m_2(f_t)
		+ \kappa \gamma (c_1+\rho)^3,
	\end{align*}
	which shows that $\frac{d}{dt} m_2(f_t) \le 0$ whenever $m_2(t) \ge (2\mu)^{-1} \kappa \gamma (c_1+\rho)^3$. Therefore,
	\[
	c_2
	\weq \sup_{t \ge 0} m_2(f_t)
	\wle \max \{ m_2(f_0), \ (2\mu)^{-1} \kappa \gamma (c_1+\rho)^3 \}.
	\]
	Similarly, \eqref{eq:DynkinMoment} for $r=3$ implies that
	\begin{align*}
		\frac{d}{dt} m_3(f_t)
		&\weq - 3\mu m_3(f_t) - \gamma m_4(f_t)
		+ \kappa\gamma m_1(f_t) \Big( \rho^3 + 3\rho^2 m_1(f_t) + 3 \rho m_2(f_t) \Big) \\
		&\wle - 3\mu m_3(f_t)
		+ \kappa\gamma (c_1 \vee c_2 + \rho)^4.
	\end{align*}
	The same argument as for $r=2$ now shows that
	\[
	c_3
	\weq \sup_{t \ge 0} m_3(f_t)
	\wle \max \{ m_3(f_0), \ (3\mu)^{-1} \kappa \gamma (c_1 \vee c_2 +\rho)^4 \}.
	\qedhere
	\]
\end{proof}

\begin{proof}[Proof of Theorem \ref{the:MeanFieldApproximation}]
	Because $\frac{1}{N} \sum_{i=1}^N \delta_{(Z^i_t, X^i_t)}$ constitutes a coupling of $\bar{\mathbf{Z}}_t$ and $\bar{\mathbf{X}}_t$, we see that 
	\begin{align*}
		W_1(f_t, \bar{\mathbf{X}}_t)
		\wle W_1(f_t, \bar{\mathbf{Z}}_t)
		+ W_1(\bar{\mathbf{Z}}_t, \bar{\mathbf{X}}_t)
		\wle W_1(f_t, \bar{\mathbf{Z}}_t)
		+ \frac{1}{N} \sum_{i=1}^N |Z_t^i - X_t^i|,
	\end{align*}
	so due to exchangeability, we find that
	\begin{align*}
		\E \left[W_1(f_t, \bar{\mathbf{X}}_t)\right]
		&\wle \E \left[W_1(f_t, \bar{\mathbf{Z}}_t)\right] + h_t,
	\end{align*}
	where $h_t := \E\left[|Z_t^1 - X_t^1|\right]$. By Lemma \ref{lem:decouplingEW}, it suffices to estimate $h_t$.
	
	From \eqref{eq:dXti2} and \eqref{eq:dZti} we see that
	\[
	\partial_t h_t
	\wle - h_t - \gamma \E \left[Z_t^1 - X_t^1)^2\right]
	+ 2 \gamma \E \int_1^N \left|F_t^1(\mathbf{Z}_t, \xi) - X_t^{\lceil\xi\rceil}\right| \frac{d\xi}{N-1}.
	\]
	We just discard the second term, while in the third term we add and subtract $Z_t^{\lceil\xi\rceil}$. Using \eqref{eq:W1ftzi} and exchangeability, we thus obtain
	\begin{align*}
		\partial_t h_t
		&\wle -h_t + 2\gamma \E\left[W_1(f_t,\bar{\mathbf{Z}}_t^1)\right] + 2 \gamma \E\left[ \frac{1}{N-1} \sum_{i=2}^N |Z_t^i - X_t^i| \right] \\
		&\wle (\theta_c-1) h_t + \frac{C}{N^{1/3}},
	\end{align*}
	thanks to Lemma~\ref{lem:decouplingEW}. The claim follows using Gronwall's lemma.
\end{proof}

\subsection{Persistence (Theorem \ref{the:persist})}

Recall that $\kappa=2$. We also assume that $\rho=\mu=1$.

\begin{proof}[Proof of Theorem \ref{the:persist}]
	We first treat the case $\theta>1$, which implies that $\theta_c>1$. Call $\epsilon = \frac{1}{2}\inf_{t\geq 0} \E[Z_t]$; thanks to Theorem \ref{the:kappa_alpha}, we know that $\epsilon >0$. Then, for any $t\geq 0$, we have
	\[
	2\epsilon
	\leq \E[Z_t]
	= W_1(f_t, \delta_0)
	\leq \E[ W_1(f_t, \bar{\mathbf{X}}_t)] + \E[ W_1(\bar{\mathbf{X}}_t, \delta_0)]
	\leq \frac{Ce^{(\theta_c-1) t}}{N^{1/3}} + \E[X_t^1],
	\]
	where in the last step we used Theorem \ref{the:MeanFieldApproximation} and exchangeability. Clearly, $Ce^{(\theta_c-1) t} N^{-1/3} \leq \epsilon$ if and only if $t \leq \tilde{c} + c \log N$ for $c = 1/[3(\theta_c-1)] >0$ and $\tilde{c} = [1/(\theta_c-1)] \log(\epsilon/C) \in \R$, from which the result follows.
	
	We now treat the case $\theta<1$. The convergence in the metric $\omega(\cdot,\cdot)$ is exactly \eqref{eq:wellpsdFinite}, so there's nothing to prove. In the case $2\gamma = \theta_c < 1$, from \eqref{eq:dXti2} it is easy to see that
	\[
	\frac{d}{dt} \E[X_t^1]
	= - \E[X_t^1] - \gamma \E[(X_t^1)^2] + \frac{2\gamma}{N-1} \sum_{i=2}^N \E[X_t^i]
	\leq -(1-\theta_c) \E[X_t^1],
	\]
	where in the last step we simply discarded the term $-\gamma \E[(X_t^1)^2] \leq 0$, and we used exchangeability. The result now follows from Grönwall's lemma.
\end{proof}

\section{Possible further research}
\label{sec:future_work}

Several lines of work can be derived from these results. For instance, we are confident that the process $Z_t$ has a unique nontrivial limit $Z_{\infty}$
in the case $\theta >1$, see Remark. This is supported by numerical simulations of the finite network for relatively large $N$: the empirical distribution of the process seems to stabilize when one lets the simulation run for a long time (even considering the fact that Theorem \ref{the:wellpsdFinite} implies that $\mbf{X}_t$ must eventually decay, see Remark \ref{remark:logN}). It would be very desirable to have a rigorous proof of this phenomenon.

An interesting extension would be to consider the firing range $\kappa$ and/or the magnitude of the excitatory impulse $\rho$ to be random. We believe that such a model should lead essentially to the same results (replacing $\kappa$ and $\rho$ with $\E[\kappa]$ and $\E[\rho]$, respectively) for sufficiently well behaved $\kappa$ and $\rho$; see Remark \ref{remark:general_kappa}.

Finally, a natural generalization would be the introduction of a spatial structure on the model, for instance by considering a random graph representing the interconnections between neurons. This can possibly lead to interesting results, depending on the underlying structure of the graph.

\bibliographystyle{abbrv}
\bibliography{persistence}

\end{document}